\newtheorem{theorem}{Theorem}[section]
\newtheorem{lem}[theorem]{Lemma}
\newtheorem{prop}[theorem]{Proposition}
\theoremstyle{definition}
\newtheorem{defn}{Definition}[section]
\theoremstyle{definition}
\theoremstyle{remark}
\newtheorem{rem}{Remark}[section]
\theoremstyle{question}
\numberwithin{equation}{section}
\journal{XXX}
\begin{document}

\begin{frontmatter}



\title{The parallel sum  for adjointable operators on Hilbert $C^*$-modules }

\author[rvt]{Wei Luo}
\ead{luoweipig1@163.com}
\author[rvt]{Chuanning Song\fnref{fn1}}
\ead{songning@shnu.edu.cn}
\author[rvt]{Qingxiang Xu\corref{cor1}\fnref{fn1}}
\ead{qxxu@shnu.edu.cn,qingxiang\_xu@126.com}
\cortext[cor1]{Corresponding author}
\address[rvt]{Department of Mathematics, Shanghai Normal University, Shanghai 200234, PR China}
\fntext[fn1]{Supported by the
National Natural Science Foundation of China (11671261).}

\begin{abstract} The parallel sum for adjoinable operators on Hilbert $C^*$-modules is introduced and studied. Some results known for matrices and bounded linear operators on Hilbert spaces are generalized to the case of adjointable operators on Hilbert $C^*$-modules. It is shown that there exist a Hilbert $C^*$-module $H$ and two positive operators $A, B\in\mathcal{L}(H)$ such that the operator equation
$A^{1/2}=(A+B)^{1/2}X,  X\in \cal{L}(H)$ has no solution, where $\mathcal{L}(H)$ denotes the set of all adjointable operators on $H$.
\end{abstract}

\begin{keyword} Hilbert $C^*$-module; Moore-Penrose inverse; parallel sum\MSC 15A09; 46L08



\end{keyword}

\end{frontmatter}



\section{Introduction}\label{sec:introduction}
Throughout this paper, $\mathbb{C}^{m\times n}$ is the set of all $m\times n$ complex matrices. For any $A\in\mathbb{C}^{m\times n}$, let $\mathcal{R}(A)$ and $A^\dag$  denote the range and the Moore-Penrose inverse  of $A$, respectively.
The parallel sum of two Hermitian positive semi-definite matrices was first introduced by Anderson and Duffin in \cite{Anderson-Duffin}. Let $A,B\in\mathbb{C}^{n\times n}$ be Hermitian positive semi-definite matrices, the parallel sum of $A$ and $B$ is defined by \begin{equation}\label{equ:notation of parallel sum}A:B=A(A+B)^\dag B,\end{equation} which can be proved to be equal to
$A(A+B)^{-}B$ for any $\{1\}$-inverse  $(A+B)^{-}$ of $A+B$. In view of such an observation, the parallel sum was generalized by Mitra and Odell \cite{Mitra-Odell} to non-square matrices $A$ and $B$ of the same size such that
\begin{equation}\label{equ:two conditions for p.s. for matrices}\mathcal{R}(A)\subseteq \mathcal{R}(A+B)\ \mbox{and}\ \mathcal{R}(A^*)\subseteq \mathcal{R}(A^*+B^*).\end{equation}
Meanwhile, Anderson \cite{Anderson} showed that the parallel sum defined by \eqref{equ:notation of parallel sum}  can be viewed as a shorted operator for Hermitian positive semi-definite matrices $A$ and $B$. There are also many different equivalent definitions and properties of the parallel sum for matrices, the reader is referred to a recent review paper \cite{Berkics} and the references therein.

Another direction of the generalization of the parallel sum is from the finite-dimensional space to the Hilbert space. The parallel sum of positive operators $A$ and $B$ on a Hilbert space was studied by Anderson-Schreiber \cite{Anderson-Schreiber} when the range of $A+B$ is closed, and furthermore studied
in \cite{Fillmore-Williams,Morley} without any restrictions on the range of $A+B$.
As the generalizations of the parallel sum,  shorted operators and the weakly parallel sum for bounded linear operators on Hilbert spaces are also studied in \cite{Anderson-Trapp,Butler-Morley} and \cite{Antezana-Corach-Stojanoff,Djikic}, respectively.

Hilbert $C^*$-modules are generalizations of both $C^*$-algebras and Hilbert spaces, which possess some new phenomena compared with that of Hilbert spaces. For instance, a closed submodule of a Hilbert $C^*$-module may fail to be orthogonally complemented \cite[P.\,7]{Lance}, a bounded linear operator from one Hilbert $C^*$-module to another may not adjointable \cite[P.\,8]{Lance}, an adjointable operator from one Hilbert $C^*$-module to another may have no polar decomposition  \cite[Theorem~15.3.7]{Wegge-Olsen} and the famous Douglas theorem \cite[Theorem~1]{Douglas} is recently proved to be not true in the Hilbert $C^*$-module case \cite[Theorem~3.2]{Fang-Moslehian-Xu}. Another big difference will be shown by Proposition~\ref{prop:conuterexample} of this paper, which indicates that the alternative interpretation of the parallel sum initiated in \cite{Fillmore-Williams} is also no longer true in the Hilbert $C^*$-module case.

The purpose of this paper is to set up the general theory of the parallel sum for adjointable operators on Hilbert $C^*$-modules. The paper is organized as follows. Some basic knowledge about Hilbert $C^*$-modules are given in Section~\ref{sec:pre}. The term of the parallel sum is generalized to the Hilbert $C^*$-module case in Section~\ref{sec:parallel summable conditions},  where parallel summable conditions similar to \eqref{equ:two conditions for p.s. for matrices} are established.
Some properties of the parallel sum are provided in Section~\ref{sec:properties of the parallel sum}. Section~\ref{sec:norm upper bound of the parallel sum} focusses on the derivation of a formula for the norm upper bound of the parallel sum. Based on the polar decomposition and the famous Douglas theorem, an alternative interpretation of the parallel sum was introduced in \cite{Fillmore-Williams} for positive operators on a Hilbert space. It is shown in Section~\ref{sec:counterexample} that the same is not true for adjointable positive operators on Hilbert $C^*$-modules.

\section{Some basic knowledge about Hilbert $C^*$-modules}\label{sec:pre}
In this section, we recall some basic knowledge about $C^*$-algebra \cite{Pedersen}, Hilbert $C^*$-module \cite{Lance} and the Moore-Penrose inverse \cite{Xu-Chen-Song,Xu-Sheng,Xu-Zhang}. An element $a$ of a $C^*$-algebra $\mathfrak{A}$ is
said to be self-adjoint if $a=a^*$; and positive,
written $a\ge 0$, if it is self-adjoint and its spectrum $Sp(a)$
lies in $[0, +\infty)$. It is known that $a$ is positive if and only
if $a=b^*b$ for some $b\in \mathfrak{A}$ \cite[Theorem~1.3.3]{Pedersen}.

Let $a$ and $b$ be two self-adjoint elements of a $C^*$-algebra
$\mathfrak{A}$. By $b\ge a$ we mean that $b-a\ge 0$. If $a\ge 0$ and
$b\ge 0$, then $a+b\ge 0$ \cite[Theorem 1.3.3]{Pedersen}. If
furthermore $0\le a\le b$, then $\Vert a\Vert\le \Vert b\Vert$
\cite[Proposition 1.3.5]{Pedersen} and $a^t\le b^t$ for any $t\in (0,1]$ \cite[Proposition 1.3.8]{Pedersen}.

Throughout the rest of this paper, $\mathfrak{A}$ is a $C^*$-algebra. An
inner-product $\mathfrak{A}$-module is a linear space $E$ which is a right
$\mathfrak{A}$-module, together with a map $(x,y)\to \big<x,y\big>:E\times E\to
\mathfrak{A}$ such that for any $x,y,z\in E, \alpha, \beta\in \mathbb{C}$ and
$a\in \mathfrak{A}$, the following conditions hold:
\begin{enumerate}
\item[(i)] $\big<x,\alpha y+\beta
z\big>=\alpha\big<x,y\big>+\beta\big<x,z\big>;$

\item[(ii)] $\big<x, ya\big>=\big<x,y\big>a;$

\item[(iii)] $\big<y,x\big>=\big<x,y\big>^*;$

\item[(iv)] $\big<x,x\big>\ge 0, \ \mbox{and}\ \big<x,x\big>=0\Longleftrightarrow x=0.$
\end{enumerate}

Let $E$ be an inner-product $\mathfrak{A}$-module. By \cite[Proposition~1.1]{Lance}, it has
$$\big\Vert \big<x,y\big>\big\Vert\le \Vert x\Vert\,\Vert y\Vert\ \mbox{for any $x,y\in E$},$$ which induces a norm on $E$ defined by
\begin{equation}\label{equ:induced norm of Hilbert module}\Vert x\Vert=\sqrt{\Vert \big<x,x\big>\Vert}\ \mbox{for any $x\in E$}.\end{equation}
If $E$ is complete with respect to this induced norm, then $E$ is called a (right) Hilbert $\mathfrak{A}$-module. For any $F\subseteq E$, let
$\overline{F}$ denote the norm closure of $F$.

\begin{defn} A closed
submodule $M$ of a Hilbert $\mathfrak{A}$-module $H$ is said to be
orthogonally complemented  if $H=M\dotplus M^\bot$, where
$$M^\bot=\big\{x\in H\big|\langle x,y\rangle=0\ \mbox{for any }\ y\in
M\big\}.$$
In this case, the projection from $H$ onto $M$ is denoted by $P_M$.
\end{defn}

Given two Hilbert $\mathfrak{A}$-modules $H$ and $K$, let ${\cal L}(H,K)$
be the set of operators $T:H\to K$ for which there is an operator $T^*:K\to
H$ such that $$\big<Tx,y\big>=\big<x,T^*y\big> \ \mbox{for any
$x\in H$ and $y\in K$}.$$ It is known that any element $T$ of ${\cal
L}(H,K)$ must be a bounded linear operator, which is also $\mathfrak{A}$-linear
in the sense that
\begin{equation}\label{equ:module linear}T(xa)=(Tx)a \ \mbox{for any $x\in H$ and $a\in \mathfrak{A}$}.\end{equation} We call ${\cal
L}(H,K)$ the set of adjointable operators from $H$ to $K$.

For any
$T\in {\cal L}(H,K)$, the range and the null space of $T$ are denoted by
${\cal R}(T)$ and ${\cal N}(T)$, respectively. In case
$H=K$, ${\cal L}(H,H)$ which we abbreviate to ${\cal L}(H)$, is a
$C^*$-algebra whose unit, written $I_H$, is the identity operator on $H$. Let $\mathcal{L}(H)_+$ be the set of all positive elements of ${\cal L}(H)$.
For any $T\in\mathcal{L}(H)$, the notation $T\ge 0$ is used to indicate that $T$ is a positive element. In the special case that $H$ is a Hilbert space, ${\cal L}(H)$ consists of all bounded linear operators on $H$, and in this case we use the notation $\mathbb{B}(H)$ instead of ${\cal L}(H)$.

Throughout the rest of this section, $H$ and $K$ are two Hilbert
$\mathfrak{A}$-modules.

\begin{lem}\label{lem:range of positive operator of power alpha}{\rm \cite[Lemma~2.3]{Xu-Fang}} Let $T\in \mathcal{L}(H)_+$. Then $\overline{{\cal R}(T^\alpha)}=\overline{{\cal R}(T)}$ for any $\alpha\in (0,1)$.
\end{lem}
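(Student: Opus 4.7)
The plan is to prove the two inclusions $\overline{\mathcal{R}(T)}\subseteq \overline{\mathcal{R}(T^{\alpha})}$ and $\overline{\mathcal{R}(T^{\alpha})}\subseteq \overline{\mathcal{R}(T)}$ separately, working entirely inside the unital $C^*$-algebra $\mathcal{L}(H)$ and using only the continuous functional calculus applied to $T\in\mathcal{L}(H)_+$ (whose spectrum lies in $[0,\|T\|]$). Since Hilbert $C^*$-module pathologies (non-complementability, failure of polar decomposition, etc.) never need to be confronted, the proof should go through essentially as in the Hilbert space case.

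For the first inclusion I would simply exploit the semigroup identity $T=T^{1-\alpha}T^{\alpha}=T^{\alpha}T^{1-\alpha}$ that holds for any $T\in\mathcal{L}(H)_+$ by functional calculus (since $\alpha\in(0,1)$, both factors are again in $\mathcal{L}(H)_+$). For every $x\in H$ this yields $Tx=T^{\alpha}(T^{1-\alpha}x)\in \mathcal{R}(T^{\alpha})$, so $\mathcal{R}(T)\subseteq \mathcal{R}(T^{\alpha})$ and taking closures gives the desired containment.

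The reverse inclusion is the main step, since $\alpha<1$ does not permit a factorization of $T^{\alpha}$ with $T$ as an explicit factor. Here I would approximate the function $f(t)=t^{\alpha}$ on $[0,\|T\|]$ by polynomials without constant term: by the Weierstrass approximation theorem choose polynomials $r_n$ with $r_n\to f$ uniformly on $[0,\|T\|]$; since $f(0)=0$ the shifted polynomials $p_n(t):=r_n(t)-r_n(0)$ still converge uniformly to $f$ and satisfy $p_n(0)=0$. Factor $p_n(t)=t\,q_n(t)$ for a polynomial $q_n$; then continuous functional calculus in $\mathcal{L}(H)$ gives
\[
\|p_n(T)-T^{\alpha}\|=\sup_{t\in Sp(T)}|p_n(t)-t^{\alpha}|\to 0,\qquad p_n(T)=T\,q_n(T).
\]
In particular $\mathcal{R}(p_n(T))\subseteq \mathcal{R}(T)$, and for each $x\in H$ the element $T^{\alpha}x$ is the norm limit of $p_n(T)x\in \mathcal{R}(T)$, so $T^{\alpha}x\in\overline{\mathcal{R}(T)}$; taking closures yields $\overline{\mathcal{R}(T^{\alpha})}\subseteq\overline{\mathcal{R}(T)}$.

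The only place requiring any care is the Weierstrass/functional calculus step: one must remember to kill the constant term of the approximating polynomials so that each $p_n(T)$ is visibly a multiple of $T$. This is a minor point rather than a genuine obstacle. No use of orthogonal complements, adjoints of $T$, or range-complementability is needed, which is precisely why the Hilbert space proof transfers to the Hilbert $C^*$-module setting without modification.
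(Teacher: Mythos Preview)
Your argument is correct: both inclusions follow exactly as you describe, and the only subtlety---killing the constant term so that each approximant factors through $T$---is handled cleanly. Note that the paper does not actually prove this lemma; it is quoted without proof from \cite{Xu-Fang}, so there is no in-paper argument to compare against, but your functional-calculus proof is the standard one and is valid verbatim in $\mathcal{L}(H)$ for any Hilbert $C^*$-module $H$.
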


\begin{defn}
The Moore-Penrose inverse (In brief, M-P inverse) of $T\in {\cal L}(H,K)$ is denoted by  $T^\dag$, which is the
unique element $X\in {\cal L}(K,H)$ satisfying
\begin{equation} \label{equ:m-p inverse} TXT=T, \ XTX=X, \ (TX)^*=TX \ \mbox{and}\ (XT)^*=XT.\end{equation}
If such a $T^\dag$ exists, then  $T$ is said to be M-P invertible.
\end{defn}

\begin{lem}\label{lem:Xu-Sheng condition of M-P invertible}{\rm \cite[Theorem~2.2]{Xu-Sheng}}\ For any $T\in\mathcal{L}(H,K)$, $T$ is M-P invertible if and only if ${\cal R}(T)$ is closed.
\end{lem}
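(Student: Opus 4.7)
The plan is to prove the two implications separately, with the forward direction being an easy consequence of the four defining identities in \eqref{equ:m-p inverse} and the reverse direction requiring genuine use of the Hilbert $\mathfrak{A}$-module structure. For the forward implication, if $T^\dag$ exists I would set $P := TT^\dag \in \mathcal{L}(K)$ and observe that $P^* = P$ by the third Penrose identity while $P^2 = T(T^\dag T T^\dag) = TT^\dag = P$ by the second; hence $P$ is an orthogonal projection. The identity $T = TT^\dag T = PT$ together with the obvious inclusion $\mathcal{R}(P) \subseteq \mathcal{R}(T)$ gives $\mathcal{R}(T) = \mathcal{R}(P)$, which is closed because the range of a self-adjoint idempotent in $\mathcal{L}(K)$ coincides with the kernel of the adjointable operator $I_K - P$.

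For the reverse implication, I would first establish that $\mathcal{R}(T) \subseteq K$ and $\mathcal{N}(T) \subseteq H$ are both orthogonally complemented under the sole hypothesis that $\mathcal{R}(T)$ is closed. Granting this, one obtains decompositions $H = \mathcal{N}(T) \dotplus \mathcal{R}(T^*)$ and $K = \mathcal{R}(T) \dotplus \mathcal{N}(T^*)$ in which the restriction $T_0 := T\big|_{\mathcal{R}(T^*)} : \mathcal{R}(T^*) \to \mathcal{R}(T)$ is a continuous $\mathfrak{A}$-linear bijection between Banach spaces and, by the open mapping theorem, has bounded inverse $T_0^{-1}$. Defining $T^\dag \in \mathcal{L}(K,H)$ to act as $T_0^{-1}$ on $\mathcal{R}(T)$ and as $0$ on $\mathcal{N}(T^*)$, a straightforward verification of the four identities in \eqref{equ:m-p inverse} completes the argument, with adjointability of $T^\dag$ following because its candidate adjoint is the analogous construction performed on $T^*$.

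The main obstacle is the preliminary step in the reverse direction, namely extracting orthogonal complementability of $\mathcal{R}(T)$ and $\mathcal{N}(T)$ from the mere closedness of $\mathcal{R}(T)$, since closed submodules of Hilbert $C^*$-modules can fail to be orthogonally complemented in general (as recalled in the introduction). My plan is to transfer the problem to the positive operator $T^*T \in \mathcal{L}(H)_+$, whose null space equals $\mathcal{N}(T)$ and whose range is closed exactly when $\mathcal{R}(T)$ is; closedness then forces $0$ to be an isolated point of $Sp(T^*T) \cup \{0\}$, so the characteristic function $\chi_{(0,\infty)}$ restricts to a continuous function on $Sp(T^*T)$ and the continuous functional calculus inside the $C^*$-algebra $\mathcal{L}(H)$ produces a projection whose kernel is $\mathcal{N}(T)$. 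Repeating the argument for $TT^*$ yields the orthogonal complement of $\mathcal{N}(T^*)$ in $K$, from which the orthogonal complementability of $\mathcal{R}(T)$ drops out.
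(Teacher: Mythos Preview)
The paper does not actually prove this lemma; it merely cites \cite[Theorem~2.2]{Xu-Sheng} and records the statement for later use, so there is no argument in the paper to compare against. Your outline is correct and is in fact close to how the cited result is established: the forward direction is exactly as you describe, and for the converse one reduces to a positive operator, extracts a spectral gap at $0$, and builds $T^\dag$ from the resulting orthogonal decompositions.

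Two remarks on the converse, since that is where the substance lies. First, the assertion that closedness of $\mathcal{R}(T)$ forces $0$ to be isolated in $Sp(T^*T)\cup\{0\}$ is the heart of the matter and is not automatic in the Hilbert $C^*$-module setting; you should indicate the mechanism. One clean route: restrict the codomain so that $T:H\to\mathcal{R}(T)$ is surjective, use the open mapping theorem together with the inner-product identity $\|y\|^2=\|\langle x,T^*y\rangle\|\le\|x\|\,\|T^*y\|$ to get $\|T^*y\|\ge c^{-1}\|y\|$ on $\mathcal{R}(T)$, hence $\|T^*Tx\|\ge c^{-1}\|Tx\|$ for all $x$; an approximate-eigenvector argument then rules out spectrum of $T^*T$ in $(0,\varepsilon)$ for small $\varepsilon$. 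Second, be careful not to invoke Lemma~\ref{lem:orthogonal} for the equivalence ``$\mathcal{R}(T)$ closed $\Leftrightarrow$ $\mathcal{R}(T^*T)$ closed'' or for the decomposition $K=\mathcal{R}(T)\dotplus\mathcal{N}(T^*)$, since in many presentations that lemma is proved \emph{using} the present one (or at least using the same machinery); your functional-calculus projection gives the complement of $\mathcal{N}(T)$ directly, and closedness of $\mathcal{R}(T^*)=\mathcal{R}(T^*T)$ then follows from the forward direction applied to the M-P inverse $f(T^*T)$ with $f(t)=t^{-1}\chi_{(0,\infty)}(t)$, after which you may repeat the construction for $TT^*$.
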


\begin{lem}\label{lem:orthogonal} {\rm (cf.\,\cite[Theorem 3.2]{Lance} and \cite[Remark 1.1]{Xu-Sheng})}\ Let $T\in {\cal
L}(H,K)$. Then the closeness of any one of the following sets
implies the closeness of the remaining three sets:
$${\cal R}(T), \ {\cal R}(T^*), \ {\cal R}(TT^*)\ \mbox{and}\  {\cal R}(T^*T).$$
If ${\cal R}(T)$ is closed, then ${\cal R}(T)={\cal R}(TT^*)$,
${\cal R}(T^*)={\cal R}(T^*T)$ and the following  orthogonal
decompositions hold:
\begin{equation*}\label{equ:orthogonal decomposition} H={\cal N}(T)\dotplus
{\cal R}(T^*)\ \mbox{and}\ \ K={\cal R}(T)\dotplus {\cal N}(T^*).\end{equation*}
\end{lem}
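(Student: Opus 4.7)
My plan is to use Lemma~\ref{lem:Xu-Sheng condition of M-P invertible} to translate every closed-range hypothesis into the existence of a Moore--Penrose inverse, and then to move the M-P inverse among the four operators $T$, $T^*$, $TT^*$, $T^*T$ by explicit algebraic formulas. The symmetry $(T,X)\mapsto(T^*,X^*)$ of the M-P conditions \eqref{equ:m-p inverse} immediately delivers $(T^*)^\dag=(T^\dag)^*$, so $\mathcal{R}(T)$ is closed if and only if $\mathcal{R}(T^*)$ is. The remaining equivalences I would handle by a guess-and-verify argument in both directions.

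In the forward direction, if $T^\dag$ exists I would verify that $(T^\dag)^*T^\dag$ satisfies the four M-P conditions for $TT^*$ by routine manipulations using $TT^\dag T=T$ and the self-adjointness of $TT^\dag$ and $T^\dag T$; the formula for $(T^*T)^\dag$ is the mirror image. In the backward direction, assume only that $(TT^*)^\dag$ exists and set $Z:=T^*(TT^*)^\dag$. Three of the four M-P relations for $T$ (namely $ZTZ=Z$, $(TZ)^*=TZ$ and $(ZT)^*=ZT$) are immediate from the self-adjointness of $(TT^*)^\dag$ together with $(TT^*)^\dag TT^*(TT^*)^\dag=(TT^*)^\dag$. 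The only substantive step is $TZT=T$. For this, I would first observe that $TT^*\big[(TT^*)^\dag TT^*-I\big]=0$ follows from $TT^*(TT^*)^\dag TT^*=TT^*$, and then, writing $M:=(TT^*)^\dag TT^*-I$, deduce
\[
\langle T^*Mx,\,T^*Mx\rangle=\langle Mx,\,TT^*Mx\rangle=0\quad\text{for every }x,
\]
so $T^*M=0$ by condition (iv) of the inner product; taking adjoints yields $TZT=T$. This identifies $Z$ with $T^\dag$, so $\mathcal{R}(T)$ is closed, and the case of $\mathcal{R}(T^*T)$ follows by replacing $T$ with $T^*$.

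For the second half of the statement, once $T^\dag$ is in hand the identity $T=TT^\dag T$ and its adjoint rewrite as $T=TT^*(T^\dag)^*$ and $T^*=T^\dag(TT^*)$ (using the self-adjointness of $T^\dag T$), which immediately yield $\mathcal{R}(T)=\mathcal{R}(TT^*)$ and $\mathcal{R}(T^*)=\mathcal{R}(T^*T)$. The operator $T^\dag T\in\mathcal{L}(H)$ is then a self-adjoint idempotent with null space $\mathcal{N}(T)$ (because $TT^\dag T=T$) and range equal to $\mathcal{R}(T^*)$; this gives the orthogonal decomposition $H=\mathcal{N}(T)\dotplus\mathcal{R}(T^*)$, and the companion decomposition of $K$ follows by symmetry.

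The main obstacle I anticipate is precisely the backward implication in the four-way equivalence: in the Hilbert $C^*$-module setting one cannot invoke Douglas' factorization theorem or a polar decomposition of $T$ to build $T^\dag$ out of $(TT^*)^\dag$ via a factor $(TT^*)^{1/2}$. The positivity computation above bypasses both obstructions, relying only on the algebraic M-P relations and the basic identity $\langle TT^*y,y\rangle=\langle T^*y,T^*y\rangle$.
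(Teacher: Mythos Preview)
The paper does not give its own proof of this lemma; it is quoted without argument from \cite[Theorem~3.2]{Lance} and \cite[Remark~1.1]{Xu-Sheng}. Your proposal is correct: the algebraic verifications that $(T^\dag)^*T^\dag$ is the M-P inverse of $TT^*$, and that $Z=T^*(TT^*)^\dag$ is the M-P inverse of $T$, all go through as you describe, and the positivity step $\langle T^*Mx,T^*Mx\rangle=\langle Mx,TT^*Mx\rangle=0$ is exactly the right substitute for Douglas' theorem in the module setting. The identification of $\mathcal{R}(T^\dag T)$ with $\mathcal{R}(T^*)$ and of $\mathcal{N}(T^\dag T)$ with $\mathcal{N}(T)$ is also correct and gives the decompositions.

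One caveat worth flagging, not about correctness but about logical dependencies in the literature. Your argument leans entirely on Lemma~\ref{lem:Xu-Sheng condition of M-P invertible} as a black box. In the cited source, the implication ``$\mathcal{R}(T)$ closed $\Rightarrow$ $T^\dag$ exists'' is itself established via the orthogonal complementability of $\mathcal{R}(T)$, i.e.\ via Lance's Theorem~3.2, which is precisely the present lemma. Within this paper that is harmless, since both lemmas are imported as external facts and Lemma~\ref{lem:Xu-Sheng condition of M-P invertible} precedes Lemma~\ref{lem:orthogonal}; but in a self-contained treatment you would want to reverse the order---prove the decomposition first by functional calculus on $T^*T$ (as Lance does), and then construct $T^\dag$ from the resulting projections. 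Your approach has the virtue of being purely algebraic once the M-P inverse is in hand, whereas Lance's route is analytic but foundational.
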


\begin{rem}\label{rem:selected properties of M-P inverse} Suppose that $T\in {\cal
L}(H,K)$ is M-P invertible. Then from \cite{Xu-Zhang} we know that $${\cal R}(T^\dag)={\cal R}(T^*), {\cal N}(T^\dag)={\cal N}(T^*), {\cal R}(T)={\cal N}(T^*)^\bot, {\cal R}(T^*)={\cal N}(T)^\bot.$$
Furthermore, it easily follows from \eqref{equ:m-p inverse} that $(T^*)^\dag=(T^\dag)^*$ and $(T^*T)^\dag=T^\dag (T^*)^\dag$.
Specifically, if $H=K$ and $T=T^*$, then $TT^\dag=T^\dag T$. If in addition $T\ge 0$, then
$T^\dag=T^\dag T T^\dag=(T^\frac12
T^\dag)^*(T^\frac12 T^\dag)\ge 0$, and $\mathcal{R}(T^\frac12)=\mathcal{R}(T)$ by Lemma~\ref{lem:orthogonal} such that
\begin{equation*}T^\dag=(T^\frac12\cdot T^\frac12)^\dag=(T^\frac12)^\dag\cdot (T^\frac12)^\dag,\end{equation*}
which means that $(T^\dag)^\frac12=(T^\frac12)^\dag$.
\end{rem}

\begin{rem}{\rm Let $T\in {\cal L}(H,K)$. By Lemma~\ref{lem:Xu-Sheng condition of M-P invertible}  we can conclude that the following two conditions are equivalent:
\begin{enumerate}
\item[{\rm (i)}] There exists  $X\in {\cal L}(K,H)$ such that $TXT=T$;
\item[{\rm (ii)}] ${\cal R}(T)$ is closed.
\end{enumerate}
}\end{rem}

\begin{defn} Let $T\in {\cal L}(H,K)$ be M-P invertible. Denote by
$$T\{1\}=\big\{X\in {\cal
L}(K,H)\,\big|\, TXT=T\big\}.$$
Any element $T^-$ of $T\{1\}$ is called a
$\{1\}$-inverse of $T$.
\end{defn}

\section{Parallel summable conditions}\label{sec:parallel summable conditions}
Throughout this section, $H, H_1,H_2,H_3$ and $K$ are Hilbert
$\mathfrak{A}$-modules unless otherwise specified.
 \begin{defn}\cite{Mitra-Odell} \label{defn:definition of parallel sum}{\rm  Let $A,B\in {\cal L}(H)$ be such that $A+B$ is M-P invertible. Then $A$ and $B$ are said to be parallel summable if
$A(A+B)^-B$ is invariant under any choice of $(A+B)^-$. In such case, the parallel sum of $A$ and $B$ is denoted by (\ref{equ:notation of parallel sum}).
}\end{defn}

Let $A,B\in\mathcal{L}(H)$ be such that $A+B$ is M-P invertible. As in the matrix case, in this section we will show that $A$ and $B$ are parallel summable if and only if \eqref{equ:two conditions for p.s. for matrices} is satisfied; see Theorem~\ref{thm:sufficient condition of parallel summable} and Remark~3.1.
In the special case that   $A$ and $B$ are both positive, we will prove in Theorem~\ref{thm:positive sum moore-penrose invertible implies parallel summable} that the M-P invertibility of $A+B$  will guarantee automatically the parallel summabillty of $A$ and $B$.

To achieve the main results of this section, we need two auxiliary lemmas as follows.

\begin{lem}\label{lem:ranges of T and TT-star}{\rm \cite[Proposition 3.7]{Lance}} It holds that $\overline{{\cal R}(TT^*)}=\overline{{\cal R}(T)}$ for any $T\in {\cal L}(H,K)$.
\end{lem}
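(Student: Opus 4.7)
My plan splits the equality into two inclusions. The inclusion $\overline{\mathcal{R}(TT^*)}\subseteq\overline{\mathcal{R}(T)}$ is immediate since $TT^*y=T(T^*y)\in\mathcal{R}(T)$ for every $y\in K$, so the closures satisfy the same containment.

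For the reverse inclusion, I would fix an arbitrary $x\in H$ and exhibit $Tx$ as a norm limit of elements of $\mathcal{R}(TT^*)$. The natural approximant is
$$y_\epsilon := TT^*(TT^*+\epsilon I_K)^{-1}\,Tx,\qquad \epsilon>0,$$
which lies in $\mathcal{R}(TT^*)$ because $TT^*+\epsilon I_K$ is invertible in the $C^*$-algebra $\mathcal{L}(K)$ (its spectrum sits in $[\epsilon,+\infty)$ since $TT^*\ge 0$). The central algebraic step is the intertwining identity $(TT^*+\epsilon I_K)T=T(T^*T+\epsilon I_H)$, which yields $(TT^*+\epsilon I_K)^{-1}T=T(T^*T+\epsilon I_H)^{-1}$ and hence
$$y_\epsilon-Tx \;=\; -\epsilon\,T(T^*T+\epsilon I_H)^{-1}x.$$

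To estimate the right-hand side, I would invoke the $C^*$-module identity $\|Tz\|^2=\|\langle T^*Tz,z\rangle\|=\|(T^*T)^{1/2}z\|^2$ with $z=(T^*T+\epsilon I_H)^{-1}x$, reducing the bound to the operator norm of $(T^*T)^{1/2}(T^*T+\epsilon I_H)^{-1}$. The continuous functional calculus applied to $T^*T\in\mathcal{L}(H)_+$ then bounds this by $\sup_{t\ge 0}\sqrt{t}/(t+\epsilon)=1/(2\sqrt{\epsilon})$, so $\|y_\epsilon-Tx\|=O(\sqrt{\epsilon})\to 0$ as $\epsilon\to 0^+$, placing $Tx\in\overline{\mathcal{R}(TT^*)}$.

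The main obstacle I would anticipate is that the slickest Hilbert-space proof uses the polar decomposition $T=U|T|$, which need not exist for $T\in\mathcal{L}(H,K)$ (the excerpt already flags this failure by citing \cite[Theorem~15.3.7]{Wegge-Olsen}). The approximation scheme above sidesteps polar decomposition entirely, relying only on the $C^*$-algebra structure of $\mathcal{L}(H)$ and $\mathcal{L}(K)$, on the intertwining identity, and on elementary functional calculus for the positive operators $T^*T$ and $TT^*$; these ingredients are available in the Hilbert $C^*$-module setting without further hypotheses.
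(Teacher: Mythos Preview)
Your argument is correct. The paper does not supply its own proof of this lemma; it is quoted verbatim as \cite[Proposition~3.7]{Lance}, so there is no in-paper argument to compare against. Your resolvent-approximation approach---using the intertwining identity $(TT^*+\epsilon I_K)^{-1}T=T(T^*T+\epsilon I_H)^{-1}$ together with the functional-calculus bound $\|(T^*T)^{1/2}(T^*T+\epsilon I_H)^{-1}\|\le 1/(2\sqrt{\epsilon})$---is a standard and self-contained way to establish the result in the Hilbert $C^*$-module setting, and you rightly note that it avoids the polar decomposition (which need not exist here) while the naive orthogonal-complement argument would also be insufficient since closed submodules need not be complemented.
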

\begin{lem}\label{lem:condition of AXB=C}{\rm \cite[Lemma~2.4]{Xu-Sheng-Gu}} Suppose that both $A\in {\cal L}(H_2, H_3)$ and $B\in {\cal L}(H_1,
H_2)$ are M-P invertible. Let  $A^-$ and $B^-$ be any $\{1\}$-inverses of $A$ and $B$, respectively. Then for any $C\in {\cal L}(H_1, H_3)$, the
equation \begin{equation} \label{equ:AXB=C} AXB=C, \  X\in {\cal
L}(H_2)\end{equation} has a solution if and only if $AA^-CB^-B=C$.
In such case, the general solution  $X$ to Eq.\,(\ref{equ:AXB=C}) is of the form
\begin{equation*}\label{eqn:a form of X with V}X=A^-CB^-+V-A^-AVBB^-,
\end{equation*}
where $V\in {\cal L}(H_2)$ is arbitrary.
\end{lem}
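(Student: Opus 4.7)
The plan is to prove both assertions of the lemma by purely algebraic manipulation, using only the defining identities $AA^-A=A$ and $BB^-B=B$ of the $\{1\}$-inverses. No analytic input is required: Lemma~\ref{lem:Xu-Sheng condition of M-P invertible} together with the M-P invertibility hypothesis guarantees $A^-\in\mathcal{L}(H_3,H_2)$ and $B^-\in\mathcal{L}(H_2,H_1)$, so every composition to appear is automatically adjointable and lands in the correct $\mathcal{L}(\cdot,\cdot)$ space. In particular $A^-CB^-\in\mathcal{L}(H_1,H_2)$, which is the kind of object we shall compose with $A$ and $B$ on either side.

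For the existence criterion I will argue in both directions. Necessity is a direct sandwich computation: if $AXB=C$ for some $X\in\mathcal{L}(H_2)$, then
\[
AA^-CB^-B=AA^-(AXB)B^-B=(AA^-A)X(BB^-B)=AXB=C.
\]
Sufficiency follows by taking the candidate $X_0:=A^-CB^-$, which, under the assumed consistency condition, satisfies $AX_0B=AA^-CB^-B=C$; note, however, that $X_0$ does not belong to $\mathcal{L}(H_2)$ unless $H_1=H_3=H_2$, so for the statement as written one really needs the parametric form to produce a bona fide element of $\mathcal{L}(H_2)$ once the ambient Hilbert modules coincide.

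For the parametric form I proceed in two steps. Substituting $X=A^-CB^-+V-A^-AVBB^-$ into $AXB$ and expanding, the two $V$-terms collapse via $AA^-A=A$ and $BB^-B=B$ into $AVB-AVB=0$, leaving exactly $C$; hence every such $X$ is a solution. Conversely, given any solution $X$, setting $V:=X$ in the formula and using $AXB=C$ turns the right-hand side into $A^-CB^-+X-A^-(AXB)B^-=A^-CB^-+X-A^-CB^-=X$, so $X$ indeed admits the claimed representation.

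I do not foresee a genuine obstacle: the derivation is a direct transcription of the classical Penrose argument for matrix equations and survives intact in $\mathcal{L}(\cdot,\cdot)$ precisely because only the reflexive axiom $TT^-T=T$ is invoked throughout. The one subtlety worth flagging is that the conclusions must be independent of the specific choice of $\{1\}$-inverses $A^-$ and $B^-$; this is automatic, since at no stage does the argument distinguish between different such choices, and the final parametrization absorbs all ambiguity into the arbitrary parameter $V$.
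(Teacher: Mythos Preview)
Your argument is the standard Penrose computation and is essentially correct; the paper itself offers no proof here but merely cites \cite{Xu-Sheng-Gu}, so there is nothing to compare against beyond noting that your approach is exactly the classical one.

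There is, however, a bookkeeping slip that you should fix, because it led you to flag a non-issue as a ``subtlety''. You assert that $A^-CB^-\in\mathcal{L}(H_1,H_2)$ and then worry that ``$X_0$ does not belong to $\mathcal{L}(H_2)$ unless $H_1=H_3=H_2$''. This is mistaken: since $B^-\in\mathcal{L}(H_2,H_1)$, $C\in\mathcal{L}(H_1,H_3)$ and $A^-\in\mathcal{L}(H_3,H_2)$, the composite $A^-CB^-$ maps $H_2\to H_1\to H_3\to H_2$ and hence lies in $\mathcal{L}(H_2)$ outright. Consequently the candidate $X_0=A^-CB^-$ is a perfectly good element of $\mathcal{L}(H_2)$, the sufficiency direction needs no caveat, and the parametric family $X=A^-CB^-+V-A^-AVBB^-$ is visibly contained in $\mathcal{L}(H_2)$ for every $V\in\mathcal{L}(H_2)$. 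Once this is corrected, your proof is clean.
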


Now, we state the main result of this section as follows:
\begin{theorem}\label{thm:sufficient condition of parallel summable} Let $A,B\in {\cal L}(H)$ be such that $A+B$ is M-P invertible.
Then $A$ and $B$ are parallel summable if and only if
\begin{equation}\label{equ:conditions of parallel summable}A=A(A+B)^+(A+B)\ \mbox{and}\ B=(A+B)(A+B)^+B.\end{equation}
\end{theorem}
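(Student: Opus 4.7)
The plan is to characterize parallel summability as a single algebraic identity and then extract the two desired conclusions through adjoint manipulations exploiting the decomposition $S=A+B$. Throughout let $S=A+B$, $P=S^\dagger S$, and $Q=SS^\dagger$.

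First I would apply Lemma~\ref{lem:condition of AXB=C} to the equation $SXS=S$. Since $S$ is M-P invertible, the general $\{1\}$-inverse has the form $X=S^\dagger + V - PVQ$ with $V\in\mathcal{L}(H)$ arbitrary, so $AXB = AS^\dagger B + AVB - APVQB$ and parallel summability is equivalent to the single condition
\begin{equation*}
AVB = APVQB \quad \text{for every } V\in\mathcal{L}(H).
\end{equation*}
Sufficiency is then immediate: if $A=AP$ and $B=QB$ the right-hand side collapses to $(AP)V(QB) = AVB$.

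For the necessity of $A=AP$, set $T = A(I-P)$ and substitute $V=(I-P)B^*$ into the identity above. The right-hand side vanishes because $P(I-P)=0$, leaving $TB^*B=0$. Taking adjoints gives $B^*BT^*=0$, and the standard identity $\mathcal{N}(B^*B)=\mathcal{N}(B)$ then yields $BT^*=0$, i.e., $TB^*=0$. Because $P$ is the projection onto $\mathcal{R}(S^*)$, we have $PS^*=S^*$, hence $(I-P)S^*=0$ and therefore $TS^*=0$. Combining $S^*=A^*+B^*$ with $TB^*=0$ gives $TA^*=0$; but $TA^* = A(I-P)(I-P)A^* = TT^*$, so the $C^*$-identity $\|T\|^2=\|TT^*\|$ forces $T=0$, i.e., $A=AP$.

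The identity $B=QB$ follows by a symmetric argument: set $T'=(I-Q)B$ and substitute $V=A^*(I-Q)$ to obtain $AA^*T'=0$, whence $A^*T'=0$ by the analogous adjoint computation. Next, $QS=S$ gives $S^*Q=S^*$, so $S^*(I-Q)=0$ and $S^*T'=0$; combined with $S^*=A^*+B^*$ this forces $B^*T'=0$. Since $B^*T' = T'^*T'$, we conclude $T'=0$.

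The main obstacle is that in the Hilbert $C^*$-module setting one cannot deduce $T=0$ from the pointwise-in-$V$ relation $TVB=0$ by rank-one test operators as in Hilbert space. The trick that sidesteps this is the identity $TS^*=0$ coming from the projection property of $P$, which together with $S^*=A^*+B^*$ converts the only accessible consequence $TB^*=0$ into the $C^*$-identity statement $TT^*=0$.
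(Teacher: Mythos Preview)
Your argument is correct and follows essentially the same route as the paper: both reduce parallel summability (via Lemma~\ref{lem:condition of AXB=C}) to the identity $AVB=APVQB$ for all $V$, substitute $V=(I-P)B^*$ (resp.\ $V=A^*(I-Q)$), and use a $C^*$-identity to upgrade the resulting relation to $A(I-P)=0$ (resp.\ $(I-Q)B=0$). The only cosmetic difference is that the paper first proves $(I-P)B^*=0$ via $(YB)^*(YB)=0$ together with $\overline{\mathcal{R}(B^*B)}=\overline{\mathcal{R}(B^*)}$ and then subtracts from $S(I-P)=0$, whereas you bypass the range-closure lemma by passing directly from $TB^*=0$ and $TS^*=0$ to $TA^*=TT^*=0$.
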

\begin{proof} Note that any $\{1\}$-inverse $(A+B)^-$ is a solution to the equation
\begin{equation}\label{equ:equation to i-inverse of A+B}(A+B)X(A+B)=A+B,\end{equation}
which guarantees the solvability of the equation above. By Lemma~\ref{lem:condition of AXB=C}, we know that the general solution $X$ to Eq.\,\eqref{equ:equation to i-inverse of A+B}
is of the form
\begin{equation*}
    X=(A+B)^\dag +Y-(A+B)^\dag (A+B)Y(A+B)(A+B)^\dag,
\end{equation*}
where $Y\in {\cal L}(H)$ is arbitrary. It follows that $A$ and $B$ are parallel summable if and only if
\begin{equation}\label{equ:demanding of Y} AYB =A(A+B)^\dag (A+B)Y(A+B)(A+B)^\dag B, \ \mbox{for any}\  Y\in {\cal L}(H).
 \end{equation}

Suppose that \eqref{equ:conditions of parallel summable} is satisfied, then the equation above is  valid obviously and thus $A$ and $B$ are parallel summable.
Conversely, suppose that $A$ and $B$ are parallel summable; or equivalently, Eq.\,(\ref{equ:demanding of Y}) is satisfied.
Then we put
\begin{equation}\label{equ:defn Y} Y=\big[I_H-(A+B)^\dag (A+B)\big]B^*.\end{equation}
Substituting such an operator $Y$ into (\ref{equ:demanding of Y}) gives
 \begin{equation}\label{equ:YB=0}A\big[I_H-(A+B)^\dag (A+B)\big]B^*B=0.
 \end{equation}
Note that \begin{equation}\label{equ:multiply A+B leads to zero}(A+B)\big[I_H-(A+B)^\dag (A+B)\big]=0,\end{equation}
so the equation above together with \eqref{equ:YB=0} yields
\begin{equation}\label{equ:middle term-B}B\big[I_H-(A+B)^\dag (A+B)\big]B^*B=0,\end{equation}
which leads clearly  to
$(YB)^*(YB)=0$, where $Y$ is defined by   \eqref{equ:defn Y}. Therefore, we have $YB=0$.
It follows that
 \begin{equation*}\big[I_H-(A+B)^\dag (A+B)\big]\xi=0\ \mbox{for any $\xi\in \overline{{\cal R}(B^*B)}$}.\end{equation*}
Since ${\cal R}(B^*)\subseteq \overline{{\cal R}(B^*)}$ and $\overline{{\cal R}(B^*)}=\overline{{\cal R}(B^*B)}$ by Lemma~\ref{lem:ranges of T and TT-star}, we have
 \begin{equation*}\big[I_H-(A+B)^\dag (A+B)\big]B^*=0.\end{equation*}
Taking $*$-operation, we get
\begin{equation}\label{equ:mulyiply B from left equals zero}B\big[I_H-(A+B)^\dag (A+B)\big]=0.\end{equation}
The first equation in (\ref{equ:conditions of parallel summable}) then follows immediately from (\ref{equ:multiply A+B leads to zero}) and (\ref{equ:mulyiply B from left equals zero}).

Similarly, the second equation in \eqref{equ:conditions of parallel summable} can be obtained  by substituting $Y=A^*\big[I_H-(A+B)(A+B)^\dag\big]$ into (\ref{equ:demanding of Y}).
\end{proof}

\begin{rem}\label{rem:one condition extended to be four conditions}{\rm Let $A,B\in {\cal L}(H)$ be such that $A+B$ is M-P invertible.
The proof of Theorem~\ref{thm:sufficient condition of parallel summable} indicates  that
\begin{eqnarray*}
                   \begin{array}{ccc}
                     A=A(A+B)^\dag (A+B) & \Longleftrightarrow  & B=B(A+B)^\dag (A+B) \\
                     \Updownarrow &  & \Updownarrow \\
                     {\cal R}(A^*)\subseteq {\cal R}(A^*+B^*) & &{\cal R}(B^*)\subseteq {\cal R}(A^*+B^*). \\
                   \end{array}
\end{eqnarray*}
Similarly, it holds that
\begin{eqnarray*}
                   \begin{array}{ccc}
                     B=(A+B)(A+B)^\dag B & \Longleftrightarrow  & A=(A+B)(A+B)^\dag A  \\
                     \Updownarrow &  & \Updownarrow \\
                     {\cal R}(B)\subseteq {\cal R}(A+B) & &{\cal R}(A)\subseteq {\cal R}(A+B). \\
                   \end{array}
\end{eqnarray*}
}\end{rem}

\begin{rem}\label{rem:two sides A or two sides B}{\rm Let $A,B\in {\cal L}(H)$ be parallel summable. Then it holds that
\begin{equation}\label{equ:two sides A or two sides B}A:B=A-A(A+B)^\dag A=B-B(A+B)^\dag B.\end{equation}
In fact, from Remark~\ref{rem:one condition extended to be four conditions} we have
\begin{equation*} A:B=(A+B)(A+B)^\dag B-B(A+B)^\dag B=B-B(A+B)^\dag B.
\end{equation*}
Similarly, \begin{equation*} A:B=A(A+B)^\dag (A+B)-A(A+B)^\dag A=A-A(A+B)^\dag A.
\end{equation*}
}\end{rem}

Next, we consider the special case that both $A$ and $B$ are positive.
\begin{lem}\label{equ:condition on positive}{\rm \cite[Lemma 4.1]{Lance}}  Let $T\in {\cal L}(H)$. Then $T\ge 0$ if and only if
 $\big<Tx, x\big>\ge 0$ for any $x\in H$.
\end{lem}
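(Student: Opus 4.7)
The plan is to handle the two implications separately. The forward direction follows immediately from the $C^*$-algebraic characterization of positivity recalled in Section~\ref{sec:pre}: if $T\ge 0$ in $\mathcal{L}(H)$, then by \cite[Theorem~1.3.3]{Pedersen} one may write $T=S^*S$ for some $S\in\mathcal{L}(H)$, so for every $x\in H$ axiom (iv) gives $\langle Tx,x\rangle=\langle S^*Sx,x\rangle=\langle Sx,Sx\rangle\ge 0$ in $\mathfrak{A}$.

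The reverse direction is the substantive one, and I would split it into two steps. \emph{Step 1: $T$ is self-adjoint.} Since $\langle Tx,x\rangle$ is positive, hence self-adjoint, in $\mathfrak{A}$, axiom (iii) yields $\langle Tx,x\rangle=\langle x,Tx\rangle$ for every $x\in H$. Expanding this identity at $x+y$ and at $x+iy$ and keeping careful track of the scalars produced by axioms (i) and (iii) (in particular $\langle ix,y\rangle=-i\langle x,y\rangle$, whereas $\langle x,iy\rangle=i\langle x,y\rangle$) gives two relations among $\langle Tx,y\rangle$, $\langle Ty,x\rangle$, and their $*$-adjoints; adding them collapses to $\langle Tx,y\rangle=\langle x,Ty\rangle$ for all $x,y\in H$, i.e. $T=T^*$.

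\emph{Step 2: The spectrum of $T$ is contained in $[0,\infty)$.} Applying the continuous functional calculus to the self-adjoint element $T\in\mathcal{L}(H)$ yields the standard decomposition $T=T_+-T_-$ with $T_\pm\ge 0$, $T_+T_-=0$, and in particular $TT_-=-T_-^2$. For an arbitrary $x\in H$, set $z=T_-^{1/2}x$, where $T_-^{1/2}$ is defined via the functional calculus and commutes with $T$. Using the self-adjointness of $T_-^{1/2}$ and of $T$, a short manipulation collapses $\langle Tz,z\rangle$ to $-\langle T_-x,T_-x\rangle$. The hypothesis forces this quantity to be $\ge 0$ in $\mathfrak{A}$, while axiom (iv) forces it to be $\le 0$; hence $\langle T_-x,T_-x\rangle=0$, and axiom (iv) again gives $T_-x=0$. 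Since $x$ is arbitrary, $T_-=0$ and therefore $T=T_+\ge 0$.

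The most delicate point is Step 1: the $\mathfrak{A}$-valued inner product is only $\mathbb{C}$-sesquilinear, and the anti-linearity encoded in axiom (iii) interacts with the $*$-operation on $\mathfrak{A}$ in a way one has to bookkeep carefully before the two expanded identities align. Step 2 is then a formal argument of the sort used in the Hilbert space case, which transfers directly since all the ingredients (continuous functional calculus on $\mathcal{L}(H)$, existence of square roots of positive elements, and commutation of functions of a self-adjoint operator) are available in any $C^*$-algebra.
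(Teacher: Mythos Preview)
The paper does not supply a proof of this lemma at all; it merely quotes the result from \cite[Lemma~4.1]{Lance}. Your argument is correct and is essentially the standard one found in Lance's text: the forward direction via $T=S^*S$, self-adjointness by polarization, and positivity via the decomposition $T=T_+-T_-$ combined with $TT_-=-T_-^2$. Since the paper offers nothing to compare against beyond the citation, there is no methodological difference to discuss; your write-up simply fills in what the paper leaves to the reference.
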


\begin{lem}\label{lem:large positive moore-penrose invertible implies large range} Let $A,B\in {\cal L}(H)$ be such that $0\le A\le B$. Then ${\cal R}(A)\subseteq {\cal R}(B)$ whenever $B$ is M-P invertible.
 \end{lem}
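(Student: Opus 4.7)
The plan is to reduce the range inclusion $\mathcal{R}(A)\subseteq \mathcal{R}(B)$ to the null-space inclusion $\mathcal{N}(B)\subseteq \mathcal{N}(A)$, exploiting the fact that M-P invertibility of $B$ forces $\mathcal{R}(B)$ to be closed and therefore orthogonally complemented.

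First I would observe that since $B\ge 0$ is self-adjoint and M-P invertible, Lemma~\ref{lem:Xu-Sheng condition of M-P invertible} gives that $\mathcal{R}(B)$ is closed, and Lemma~\ref{lem:orthogonal} then yields the orthogonal decomposition $\mathcal{R}(B)=\mathcal{N}(B^*)^\bot=\mathcal{N}(B)^\bot$. So it suffices to show $\mathcal{R}(A)\subseteq \mathcal{N}(B)^\bot$.

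Next I would establish $\mathcal{N}(B)\subseteq \mathcal{N}(A)$. Take $x\in \mathcal{N}(B)$, so $\langle Bx,x\rangle=0$. Since $0\le A\le B$, the operator $B-A$ is positive, and Lemma~\ref{equ:condition on positive} yields $0\le \langle Ax,x\rangle\le \langle Bx,x\rangle=0$ in $\mathfrak{A}$, forcing $\langle Ax,x\rangle=0$. Since $A\in\mathcal{L}(H)_+$ admits a positive square root $A^{1/2}$, rewriting $\langle Ax,x\rangle=\langle A^{1/2}x,A^{1/2}x\rangle$ and applying axiom (iv) of the inner product gives $A^{1/2}x=0$, hence $Ax=A^{1/2}(A^{1/2}x)=0$. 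Thus $x\in\mathcal{N}(A)$.

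Finally, since $A=A^*$, for any $x\in H$ and $y\in\mathcal{N}(A)$ we have $\langle Ax,y\rangle=\langle x,Ay\rangle=0$, so $\mathcal{R}(A)\subseteq \mathcal{N}(A)^\bot$. Combining with the inclusion from the previous step yields $\mathcal{N}(A)^\bot\subseteq \mathcal{N}(B)^\bot=\mathcal{R}(B)$, and therefore $\mathcal{R}(A)\subseteq \mathcal{R}(B)$, as required. No serious obstacle is anticipated here; the only point that genuinely uses the $C^*$-module structure (as opposed to a purely Hilbert-space argument) is the step passing from $\langle Ax,x\rangle=0$ to $Ax=0$, and this is handled cleanly by factoring through $A^{1/2}$ and invoking the definiteness axiom of the $\mathfrak{A}$-valued inner product.
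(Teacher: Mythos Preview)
Your proof is correct and follows essentially the same route as the paper: establish $\mathcal{R}(B)=\mathcal{N}(B)^\bot$ from M-P invertibility and self-adjointness of $B$, deduce $\mathcal{N}(B)\subseteq\mathcal{N}(A)$ from $0\le A\le B$ via Lemma~\ref{equ:condition on positive}, and conclude $\mathcal{R}(A)\subseteq\mathcal{N}(A)^\bot\subseteq\mathcal{N}(B)^\bot=\mathcal{R}(B)$. The only difference is that you spell out the passage from $\langle Ax,x\rangle=0$ to $Ax=0$ explicitly through $A^{1/2}$, whereas the paper leaves this step implicit.
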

 \begin{proof}Since $0\le B$ and $B$ is M-P invertible, we have
 ${\cal R}(B)={\cal R}(B^*)={\cal N}(B)^\bot.$ Furthermore, from the assumption $0\le A\le B$ and Lemma~\ref{equ:condition on positive} we can get ${\cal N}(B)\subseteq {\cal N}(A)$, therefore
 \begin{eqnarray*}&&{\cal R}(A)\subseteq {\cal N}(A^*)^\bot={\cal N}(A)^\bot\subseteq {\cal N}(B)^\bot={\cal R}(B). \qedhere\end{eqnarray*}
 \end{proof}

\begin{theorem}\label{thm:positive sum moore-penrose invertible implies parallel summable} Suppose that $A,B\in \mathcal{L}(H)_+$ and $A+B$ is M-P invertible. Then $A$ and $B$ are parallel summable such that $A:B\ge 0$.
\end{theorem}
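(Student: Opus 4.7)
The plan is to decompose the argument into two essentially independent parts: first establishing parallel summability, and then verifying that $A:B\ge 0$.

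For parallel summability, the idea is to exploit positivity directly. Since $A, B \ge 0$ we have $0 \le A \le A+B$ and $0 \le B \le A+B$, and the M-P invertibility of $A+B$ allows one to apply Lemma~\ref{lem:large positive moore-penrose invertible implies large range} to conclude $\mathcal{R}(A) \subseteq \mathcal{R}(A+B)$ and $\mathcal{R}(B) \subseteq \mathcal{R}(A+B)$. Because $A$ and $B$ are self-adjoint, Remark~\ref{rem:one condition extended to be four conditions} turns these inclusions into the pair of identities \eqref{equ:conditions of parallel summable}, and Theorem~\ref{thm:sufficient condition of parallel summable} then yields parallel summability.

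For the positivity of $A:B$, I would work from the symmetric formula $A:B = A - A(A+B)^\dag A$ provided by Remark~\ref{rem:two sides A or two sides B}, and introduce $S := (A+B)^{1/2}$. The M-P invertibility of $S$ follows from that of $S^2 = SS^*$ via Lemma~\ref{lem:orthogonal}, and Remark~\ref{rem:selected properties of M-P inverse} then gives $(A+B)^\dag = (S^\dag)^2$. Let $P = SS^\dag = S^\dag S$ denote the projection onto $\mathcal{R}(S) = \mathcal{R}(A+B)$; the inclusion $\mathcal{R}(A) \subseteq \mathcal{R}(P)$ together with the self-adjointness of $A$ gives $PA = AP = A$. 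The key step is then to introduce $W := S^\dag A S^\dag$ and record the factorization $A = SS^\dag\cdot A\cdot S^\dag S = PAP = SWS$.

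With this factorization in hand, the remainder is routine bookkeeping. Applying the inequality $\langle Ay, y\rangle \le \langle Sy, Sy\rangle$ (coming from $A \le S^2$) to $y = S^\dag x$, and invoking Lemma~\ref{equ:condition on positive}, yields $0 \le W \le P \le I_H$. Using $PS^\dag = S^\dag P = S^\dag$ one verifies $PW = WP = W$, so that $A(A+B)^\dag A = SWS\,(S^\dag)^2\,SWS$ collapses to $SW^2 S$. Subtracting from $A = SWS$ gives $A:B = S\,W(I_H - W)\,S$, and since $W$ and $I_H - W$ are commuting positive elements of $\mathcal{L}(H)$ their product is positive, while conjugation by the self-adjoint $S$ preserves positivity; hence $A:B \ge 0$.

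The main obstacle will be locating the factorization $A = SWS$ with $W$ a positive contraction. Because the Douglas-type identity $A^{1/2} = (A+B)^{1/2} X$ is in general unavailable in the Hilbert $C^*$-module setting (precisely the phenomenon advertised in Section~\ref{sec:counterexample}), one cannot hope to write $A = X^*(A+B) X$ through a single operator $X$ produced by Douglas's theorem; the \emph{two-sided} sandwich of $A$ by $S$ via the M-P inverse $S^\dag$ is the device that sidesteps this difficulty and reduces the positivity question to the elementary fact that $W(I_H-W)\ge 0$ on the spectrum $[0,1]$ of $W$.
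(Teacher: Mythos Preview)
Your argument is correct. The first half (parallel summability) matches the paper's proof verbatim. For the positivity of $A:B$, you take a different but equally valid route.

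The paper factors through $B^{1/2}$: it writes $A:B=B^{1/2}\big[I_H-B^{1/2}(A+B)^\dag B^{1/2}\big]B^{1/2}$ and shows that the bracketed operator is positive by the norm trick $\big\Vert B^{1/2}(A+B)^\dag B^{1/2}\big\Vert=\Vert TT^*\Vert=\Vert T^*T\Vert=\big\Vert ((A+B)^\dag)^{1/2}B((A+B)^\dag)^{1/2}\big\Vert\le \Vert (A+B)(A+B)^\dag\Vert\le 1$, with $T=B^{1/2}((A+B)^\dag)^{1/2}$. You instead factor through $S=(A+B)^{1/2}$: defining $W=S^\dag A S^\dag$ and using $PA=AP=A$ to obtain $A=SWS$, you compute $A:B=SW(I_H-W)S$ and conclude from $0\le W\le I_H$. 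Your $W$ is exactly the $A$-version of the paper's $T^*T$, so the two arguments share the same underlying inequality $((A+B)^\dag)^{1/2}A((A+B)^\dag)^{1/2}\le P$; the difference is that the paper reaches it through a $C^*$-norm identity while you reach it through the order relation $\langle Ay,y\rangle\le\langle S^2 y,y\rangle$ applied at $y=S^\dag x$. The paper's route is shorter (it avoids verifying $PA=A$ and the factorization $A=SWS$), while yours is more structural and makes the spectral picture $A:B=S\,W(I_H-W)\,S$ with $\mathrm{Sp}(W)\subseteq[0,1]$ explicit, which could be useful if one later wants finer information about $A:B$.
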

\begin{proof} Since $A,B\in \mathcal{L}(H)_+$, we have $0\le A\le A+B$. As $A+B$ is M-P invertible,
by Lemma~\ref{lem:large positive moore-penrose invertible implies large range} we have
${\cal R}(A^*)={\cal R}(A)\subseteq {\cal R}(A+B)={\cal R}(A^*+B^*)$.
It follows from Remark~\ref{rem:one condition extended to be four conditions} that (\ref{equ:conditions of parallel summable}) is satisfied, and
hence  $A$ and $B$ are parallel summable by Theorem~\ref{thm:sufficient condition of parallel summable}.

Next, we prove that $A:B\ge 0$. Indeed, since $A+B\ge 0$, we have $$(A+B)^\dag \ge 0  \ \mbox{and}\  (A+B)(A+B)^\dag=(A+B)^\dag (A+B),$$ therefore  $$(A+B)\left((A+B)^\dag\right)^\frac12=\left((A+B)^\dag\right)^\frac12 (A+B).$$
Let $T=B^{\frac12} \big((A+B)^\dag\big)^\frac12$. Then
\begin{eqnarray*}&&\Vert B^{\frac12} (A+B)^\dag B^{\frac12}\Vert=\Vert TT^*\Vert=\Vert T^*T\Vert=\left\Vert \left((A+B)^\dag\right)^\frac12  B \left((A+B)^\dag\right)^\frac12\right\Vert\\
&&\le \left\Vert \left((A+B)^\dag\right)^\frac12 (A+B) \left((A+B)^\dag\right)^\frac12\right\Vert=\Vert (A+B)(A+B)^\dag\Vert\le 1,
\end{eqnarray*}
which means that $I_H-B^{\frac12} (A+B)^\dag B^{\frac12}\ge 0$. Accordingly,
\begin{equation*} A:B=B-B(A+B)^\dag B=B^\frac12\left[I_H-B^{\frac12} (A+B)^\dag B^{\frac12}\right]B^\frac12\ge 0. \qedhere
\end{equation*}
\end{proof}

\section{Some properties of the parallel sum}\label{sec:properties of the parallel sum}
Throughout this section, $H$ is a Hilbert $\mathfrak{A}$-module. In this section, we provide some properties of the parallel sum in the general setting of adjointable operators on Hilbert $C^*$-modules.
First, we derive certain properties of $A:B$ similar to that in \cite[Section~III]{Anderson-Duffin} with  no demanding on the existence of $A^\dag$ and $B^\dag$.

\begin{prop}\label{prop:change position of A and B} {\rm (cf.\,\cite[Lemma~1]{Anderson-Duffin})} Let $A,B\in {\cal L}(H)$ be parallel summable. Then $A:B=B:A$.
\end{prop}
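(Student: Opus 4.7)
The plan is to exploit the two symmetric expressions for the parallel sum already established in Remark~\ref{rem:two sides A or two sides B}, combined with the observation that the parallel summability condition itself is symmetric in $A$ and $B$.

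First, I would note that the four equivalent characterizations of parallel summability collected in Remark~\ref{rem:one condition extended to be four conditions} are manifestly symmetric under the interchange $A\leftrightarrow B$: both sets of range inclusions appear in pairs, and $(A+B)^\dag = (B+A)^\dag$. Combined with Theorem~\ref{thm:sufficient condition of parallel summable}, this shows that $(A,B)$ parallel summable implies $(B,A)$ is also parallel summable, and in particular both $A(A+B)^\dag B$ and $B(A+B)^\dag A$ are well-defined as parallel sums.

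Next, I would apply Remark~\ref{rem:two sides A or two sides B} directly to the pair $(A,B)$ to obtain
\begin{equation*}
A:B \;=\; B - B(A+B)^\dag B.
\end{equation*}
Then I would apply the same remark to the pair $(B,A)$, using $B+A = A+B$ and hence $(B+A)^\dag = (A+B)^\dag$, which yields
\begin{equation*}
B:A \;=\; B - B(B+A)^\dag B \;=\; B - B(A+B)^\dag B.
\end{equation*}
Comparing the two right-hand sides gives $A:B = B:A$.

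I do not anticipate any real obstacle: the nontrivial content has already been absorbed into Theorem~\ref{thm:sufficient condition of parallel summable}, Remark~\ref{rem:one condition extended to be four conditions}, and Remark~\ref{rem:two sides A or two sides B}. The only point worth being careful about is confirming that parallel summability is genuinely a symmetric property of the pair $(A,B)$ before invoking Remark~\ref{rem:two sides A or two sides B} in its $(B,A)$ form; this is immediate from the symmetric range conditions in Remark~\ref{rem:one condition extended to be four conditions}.
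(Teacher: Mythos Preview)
Your proposal is correct and essentially matches the paper's approach: both use Remark~\ref{rem:two sides A or two sides B} together with the symmetry of the parallel summability conditions from Remark~\ref{rem:one condition extended to be four conditions}. The paper's execution differs only cosmetically---it rewrites $B - B(A+B)^\dag B$ as $B(A+B)^\dag(A+B) - B(A+B)^\dag B = B(A+B)^\dag A$ and recognizes this as $B:A$ directly, whereas you apply Remark~\ref{rem:two sides A or two sides B} separately to $(A,B)$ and $(B,A)$ and compare; the content is the same.
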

\begin{proof}By (\ref{equ:two sides A or two sides B}), (\ref{equ:conditions of parallel summable}) and Remakr~\ref{rem:one condition extended to be four conditions}, we have
\begin{eqnarray*}A:B&=&B-B(A+B)^\dag B=B(A+B)^\dag (A+B)-B(A+B)^\dag B\\
&=&B(A+B)^\dag A=B:A.\qedhere
\end{eqnarray*}
\end{proof}

\begin{prop}\label{prop:range of parallel sum} {\rm (cf.\,\cite[Lemma~3]{Anderson-Duffin})}  Let $A,B\in {\cal L}(H)$ be parallel summable. Then
\begin{equation}\label{equ:the range of the parallel sum}{\cal R}(A:B)={\cal R}(A)\cap {\cal R}(B).\end{equation}
\end{prop}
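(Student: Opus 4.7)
The plan is to establish the two inclusions in \eqref{equ:the range of the parallel sum} separately. For the inclusion $\mathcal{R}(A:B)\subseteq \mathcal{R}(A)\cap\mathcal{R}(B)$, I would simply read off from the defining formula $A:B=A(A+B)^\dag B$ that $\mathcal{R}(A:B)\subseteq\mathcal{R}(A)$, and then invoke Proposition~\ref{prop:change position of A and B} to get $A:B=B:A=B(A+B)^\dag A$, which gives $\mathcal{R}(A:B)\subseteq\mathcal{R}(B)$ by the same reasoning. This half is purely formal and costs nothing.

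The substantive step is the reverse inclusion $\mathcal{R}(A)\cap\mathcal{R}(B)\subseteq\mathcal{R}(A:B)$. Given $y\in\mathcal{R}(A)\cap\mathcal{R}(B)$, pick $u,v\in H$ with $Au=y=Bv$. I claim that $u+v$ is the required preimage, i.e.\ $(A:B)(u+v)=y$. To verify this, I would use the identity $A(A+B)^\dag B=A-A(A+B)^\dag A$ from \eqref{equ:two sides A or two sides B} on the $u$-term, keep the $v$-term as it stands, and substitute $Au=y$ and $Bv=y$ at the very end:
\begin{align*}
(A:B)(u+v) &= A(A+B)^\dag Bu + A(A+B)^\dag Bv\\
&= \bigl[Au-A(A+B)^\dag Au\bigr] + A(A+B)^\dag Bv\\
&= y - A(A+B)^\dag y + A(A+B)^\dag y = y.
\end{align*}
Hence $y\in\mathcal{R}(A:B)$, completing the proof.

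I do not anticipate a serious obstacle: the argument is a direct algebraic manipulation of the identities already established in Theorem~\ref{thm:sufficient condition of parallel summable}, Remark~\ref{rem:one condition extended to be four conditions}, Remark~\ref{rem:two sides A or two sides B}, and Proposition~\ref{prop:change position of A and B}. In particular, one avoids any appeal to polar decomposition or to Douglas' factorization lemma, both of which can fail for adjointable operators on Hilbert $C^*$-modules. The only genuinely inventive step is guessing the correct preimage $u+v$; once this is on the page, the verification reduces to a three-line cancellation.
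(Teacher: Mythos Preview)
Your proof is correct and follows essentially the same route as the paper: both arguments obtain the easy inclusion directly from the known representations of $A:B$, and for the reverse inclusion both guess the preimage $u+v$ and verify $(A:B)(u+v)=y$ via the identities in \eqref{equ:two sides A or two sides B}. Your verification is in fact a shade tidier—by applying \eqref{equ:two sides A or two sides B} only to the $u$-term and leaving the $v$-term in the form $A(A+B)^\dag Bv$, you obtain an immediate cancellation, whereas the paper applies \eqref{equ:two sides A or two sides B} to both terms and then needs the summability condition $(A+B)(A+B)^\dag B=B$ once more to finish.
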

\begin{proof} It follows clearly from (\ref{equ:two sides A or two sides B}) that ${\cal R}(A:B)\subseteq {\cal R}(A)\cap {\cal R}(B)$. Conversely,
 given any $\xi\in {\cal R}(A)\cap {\cal R}(B)$, let $u\in A^{-1}\{\xi\}$ and $v\in B^{-1}\{\xi\}$ be chosen arbitrary. Then by (\ref{equ:two sides A or two sides B}) and (\ref{equ:conditions of parallel summable}), we have
\begin{eqnarray}&&(A:B)(u+v)=\big[A-A(A+B)^\dag A\big]u+\big[B-B(A+B)^\dag B\big]v\nonumber\\
&&=\big[I_H-A(A+B)^\dag\big]\xi+\big[I_H-B(A+B)^\dag\big]\xi\nonumber\\
\label{equ:(A:B)(invA+invB)=1}&&=\xi+\big[I_H-(A+B)(A+B)^\dag\big]\xi\nonumber\\
\label{equ:(A:B)(invA+invB)=1}&&=\xi+\big[I_H-(A+B)(A+B)^\dag\big]Bv=\xi.
\end{eqnarray}
This completes the proof of ${\cal R}(A)\cap {\cal R}(B)\subseteq {\cal R}(A:B)$. Therefore, \eqref{equ:the range of the parallel sum} is satisfied.
\end{proof}

\begin{prop}\label{prop:parallel of two projections} {\rm (cf.\,\cite[Theorem~8]{Anderson-Duffin})} Suppose that $P,Q\in\mathcal{L}(H)$ are two projections such that
$P+Q$ is M-P invertible. Then $P:Q=\frac12 P_M$, where $M=\mathcal{R}(P)\cap \mathcal{R}(Q)$.
\end{prop}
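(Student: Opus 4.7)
The plan is to show that $E := 2(P:Q)$ is a self-adjoint idempotent with range exactly $M$, from which $E = P_M$ follows and hence $P:Q = \tfrac12 P_M$.

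First, since projections are positive operators and $P+Q$ is M-P invertible, Theorem~\ref{thm:positive sum moore-penrose invertible implies parallel summable} guarantees that $P$ and $Q$ are parallel summable with $P:Q \ge 0$; in particular $E$ is self-adjoint. By Proposition~\ref{prop:range of parallel sum}, $\mathcal{R}(P:Q) = \mathcal{R}(P)\cap \mathcal{R}(Q) = M$, so $\mathcal{R}(E) = M$.

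The key step is to establish the identity $(P:Q)\xi = \tfrac12 \xi$ for every $\xi \in M$. Write $R := P+Q$. Since $P$ and $Q$ are projections, $\xi \in M$ forces $P\xi = Q\xi = \xi$, and hence $R\xi = 2\xi$. As $R$ is positive and M-P invertible, Lemma~\ref{lem:large positive moore-penrose invertible implies large range} gives $\xi \in \mathcal{R}(P)\subseteq \mathcal{R}(R)$; by Lemma~\ref{lem:orthogonal} this range is closed and equals $\mathcal{N}(R)^\bot$, so $RR^\dagger \xi = \xi$. Since $R = R^*$, Remark~\ref{rem:selected properties of M-P inverse} yields $R^\dagger R = R R^\dagger$, so applying $R^\dagger$ to $R\xi = 2\xi$ gives $R^\dagger \xi = \tfrac12 \xi$. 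Substituting into the formula $P:Q = P - P(P+Q)^\dagger P$ from Remark~\ref{rem:two sides A or two sides B}, one computes $(P:Q)\xi = P\xi - PR^\dagger(P\xi) = \xi - \tfrac12 P\xi = \tfrac12 \xi$.

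Combining this identity with $\mathcal{R}(P:Q) = M$ gives $(P:Q)^2 = \tfrac12 (P:Q)$: for any $\eta \in H$, the vector $(P:Q)\eta$ lies in $M$, and a second application of $P:Q$ halves it. Hence $E^2 = 4(P:Q)^2 = 2(P:Q) = E$, so $E$ is a self-adjoint idempotent in $\mathcal{L}(H)$. A self-adjoint idempotent with range $M$ automatically forces $M$ to be orthogonally complemented and equals $P_M$, completing the proof. The only subtle point is the passage $RR^\dagger \xi = \xi$ on $M$, which relies on combining Lemma~\ref{lem:large positive moore-penrose invertible implies large range} with the closed-range description of $\mathcal{R}(R)$ in Lemma~\ref{lem:orthogonal}; after that, everything reduces to one short algebraic computation.
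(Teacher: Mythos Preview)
Your proof is correct and follows the same overall strategy as the paper: show that $E=2(P:Q)$ is a self-adjoint idempotent with range $M$. The only difference is in how the key identity $(P:Q)\xi=\tfrac12\xi$ for $\xi\in M$ is obtained: the paper invokes the preimage formula \eqref{equ:(A:B)(invA+invB)=1} with $u=v=\xi$ to get $(P:Q)(2\xi)=\xi$ directly, whereas you compute $R^\dagger\xi=\tfrac12\xi$ from $R\xi=2\xi$ and then plug into $P:Q=P-PR^\dagger P$; both routes are equally valid and yield the same conclusion.
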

\begin{proof}From Theorem~\ref{thm:positive sum moore-penrose invertible implies parallel summable} we know that $T=2 (P:Q)\ge 0$, hence $T$ is self-adjoint. For any $x\in H$, we have $\xi=(P:Q)x\in\mathcal{R}(P)\cap \mathcal{R}(Q)$ by Proposition~\ref{prop:range of parallel sum}, hence
$\xi\in P^{-1}\{\xi\}\cap Q^{-1}\{\xi\}$ since $P$ and $Q$ are projections. It follows from \eqref{equ:(A:B)(invA+invB)=1} that
\begin{eqnarray*}T^2x=4(P:Q)\xi=2(P:Q)(\xi+\xi)=2\xi=2(P:Q)x=Tx,
\end{eqnarray*}
which means that $T$ is idempotent. Therefore, $T$ is a projection.
\end{proof}

\begin{prop}{\rm (cf.\,\cite[Lemma~6]{Anderson-Duffin})} Let  $A,B,C\in {\cal L}(H)$ be such that
both $(A:B):C$ and $A:(B:C)$ are well-defined. Then $$(A:B):C=A:(B:C).$$
\end{prop}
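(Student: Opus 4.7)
The plan is to match both sides by combining a range-equality argument with a pointwise evaluation identity, and then to close by direct algebraic manipulation.

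First I would check that the ranges agree. Applying Proposition~\ref{prop:range of parallel sum} twice,
\begin{equation*}
\mathcal{R}\bigl((A:B):C\bigr) = \mathcal{R}(A:B)\cap\mathcal{R}(C) = \mathcal{R}(A)\cap\mathcal{R}(B)\cap\mathcal{R}(C) = \mathcal{R}(A)\cap\mathcal{R}(B:C) = \mathcal{R}\bigl(A:(B:C)\bigr),
\end{equation*}
so both operators land in the common submodule $\mathcal{R}(A)\cap\mathcal{R}(B)\cap\mathcal{R}(C)$.

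Next, I would extend the calculation from the proof of Proposition~\ref{prop:range of parallel sum} to a triple setting. For any $\xi \in \mathcal{R}(A)\cap\mathcal{R}(B)\cap\mathcal{R}(C)$ and any preimages $u\in A^{-1}\{\xi\}$, $v\in B^{-1}\{\xi\}$, $w\in C^{-1}\{\xi\}$, the displayed calculation in Proposition~\ref{prop:range of parallel sum} yields $(A:B)(u+v)=\xi$. Hence $u+v\in (A:B)^{-1}\{\xi\}$, and re-running the same calculation for the parallel summable pair $(A:B,\,C)$ gives
\begin{equation*}
\bigl((A:B):C\bigr)(u+v+w)=\xi.
\end{equation*}
Symmetrically, $(B:C)(v+w)=\xi$ forces $(A:(B:C))(u+v+w)=\xi$. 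Thus the two operators coincide on every element of the form $u+v+w$ that comes from a common value $\xi$.

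To promote this pointwise agreement to equality as operators, I would unfold both sides via the explicit formulas $A:B=A-A(A+B)^\dag A=A(A+B)^\dag B$ from Remark~\ref{rem:two sides A or two sides B} and Proposition~\ref{prop:change position of A and B}, combined with the parallel summability identities $(A+B)(A+B)^\dag A=A$ and $(A+B)(A+B)^\dag B=B$ from Theorem~\ref{thm:sufficient condition of parallel summable} and Remark~\ref{rem:one condition extended to be four conditions}. A sequence of substitutions, using these identities for each of the four parallel summable pairs $(A,B)$, $(A:B,C)$, $(B,C)$, $(A,B:C)$, together with the symmetry $A:B=B:A$ (Proposition~\ref{prop:change position of A and B}), should reduce both $(A:B):C$ and $A:(B:C)$ to a single symmetric expression in $A$, $B$, $C$.

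The principal obstacle is this last step. Without polar decomposition or the Douglas factorisation (both of which fail in Hilbert $C^*$-modules, as emphasised in the introduction), there is no useful closed form for $((A:B)+C)^\dag$ or $(A+(B:C))^\dag$, so the reduction must be carried out using only the identities already at hand. The key will be to rearrange factors so that at every step one of the parallel summability identities applies and causes a Moore-Penrose inverse to cancel, exploiting the symmetry $A:B=B:A$ to interchange the roles of the operators when needed.
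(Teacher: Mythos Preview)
Your first two steps are exactly right and match the paper: the range identity is immediate, and the triple pointwise identity
\[
\bigl((A:B):C\bigr)(u+v+w)=\xi=\bigl(A:(B:C)\bigr)(u+v+w)
\]
holds for any $\xi\in\mathcal{R}(A)\cap\mathcal{R}(B)\cap\mathcal{R}(C)$ and any preimages $u,v,w$. The gap is in your third step. You propose to establish equality as operators by expanding both sides through the defining formulas and cancelling Moore--Penrose inverses; you then concede that this is ``the principal obstacle'' and leave it undone. This is not a minor detail: there is no evident way to relate $\bigl((A:B)+C\bigr)^\dag$ to $\bigl(A+(B:C)\bigr)^\dag$ by purely algebraic identities, and the parallel summability relations you list only tell you how each of the four M--P inverses interacts with the operators adjacent to it, not with the other M--P inverse.

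The paper closes the argument differently, and the idea is worth absorbing. Rather than manipulate both sides symbolically, fix an arbitrary $x\in H$, set $\xi=\bigl(A:(B:C)\bigr)x$, and use the explicit formulas from Remark~\ref{rem:two sides A or two sides B} to write down \emph{specific} preimages $u\in A^{-1}\{\xi\}$, $v\in B^{-1}\{\xi\}$, $w\in C^{-1}\{\xi\}$ built from $x$. A direct computation then shows
\[
u+v+w = x + \bigl[I_H-(B+C)^\dag(B+C)\bigr]\bigl[A+(B:C)\bigr]^\dag A x,
\]
and since $(A:B):C$ factors through $C$ on the right while $C\bigl[I_H-(B+C)^\dag(B+C)\bigr]=0$, the extra term is killed: $\bigl((A:B):C\bigr)(u+v+w)=\bigl((A:B):C\bigr)x$. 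Combined with your pointwise identity, this gives $\bigl((A:B):C\bigr)x=\bigl(A:(B:C)\bigr)x$ for every $x$. So the missing ingredient is not a symmetric algebraic reduction but rather this asymmetric trick of choosing $u,v,w$ adapted to one side and checking that $u+v+w-x$ lies in the kernel of the other.
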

\begin{proof} Given any $\xi\in {\cal R}(A)\cap {\cal R}(B)\cap {\cal R}(C)$, let $u\in A^{-1}\{\xi\},v\in B^{-1}\{\xi\}$ and $w\in C^{-1}\{\xi\}$ be chosen arbitrary. Then from \eqref{equ:(A:B)(invA+invB)=1},  we have
\begin{eqnarray}&&\big[(A:B):C\big](u+v+w)=\big[(A:B):C\big](u+v)+\big[(A:B):C\big]w\nonumber\\
&&=C\big[(A:B)+C\big]^\dag (A:B)(u+v)+\Big[C-C\big[(A:B)+C\big]^\dag C\Big]w\nonumber\\
\label{equ:two sides of three operator}&&=C\big[(A:B)+C\big]^\dag\xi+\xi-C\big[(A:B)+C\big]^\dag\xi=\xi.
\end{eqnarray}

Now for any $x\in H$, let
\begin{equation}\label{equ:defn of xi}\xi=\big[A:(B:C)\big]x\in {\cal R}(A)\cap {\cal R}(B)\cap {\cal R}(C).\end{equation}
Then $\xi=\Big(A-A\big[A+(B:C)\big]^\dag A\Big)x=Au$, where
\begin{equation}\label{equ:expression of u}u=\Big(I_H-\big[A+(B:C)\big]^\dag A\Big)x\in A^{-1}\{\xi\}.
\end{equation}
Furthermore,
$$\xi=(B:C)\big[A+(B:C)\big]^\dag Ax=\big[B-B(B+C)^\dag B\big]\big[A+(B:C)\big]^\dag Ax,$$
hence
\begin{equation}\label{equ:expression of v}v=\big[I_H-(B+C)^\dag B\big]\big[A+(B:C)\big]^\dag Ax\in B^{-1}\{\xi\}.
\end{equation}
Similarly, we know that
\begin{equation}\label{equ:expression of w}w=\big[I_H-(B+C)^\dag C\big]\big[A+(B:C)\big]^\dag Ax\in C^{-1}\{\xi\}.
\end{equation}

Let $u,v$ and $w$ be given by (\ref{equ:expression of u})--(\ref{equ:expression of w}). Then by
(\ref{equ:two sides of three operator})--(\ref{equ:defn of xi}), we have
\begin{equation}\label{equ:left action equals right}\big[(A:B):C\big](u+v+w)=\big[A:(B:C)\big]x.
\end{equation}
On the other hand, by (\ref{equ:expression of u})--(\ref{equ:expression of w}) we have
\begin{equation}\label{equ:expression of u+v+w}u+v+w=x+\big[I_H-(B+C)^\dag (B+C)\big]\big[A+(B:C)\big]^\dag Ax.
\end{equation}
Note that $C\big[I_H-(B+C)^\dag (B+C)\big]=0$ and $$(A:B):C=(A:B)\Big[(A:B)+C\big]^\dag C,$$ so from (\ref{equ:expression of u+v+w})
we obtain $\big[(A:B):C\big](u+v+w)=\big[(A:B):C\big]x$. This equality together with (\ref{equ:left action equals right}) yields
$$\big[(A:B):C\big]x=\big[A:(B:C)\big]x,\ \mbox{for any $x\in H$.}$$
The asserted equality then follows from the arbitrariness of $x$ in $H$.
\end{proof}

\begin{rem}It is unknown for us  that the existence of $(A:B):C$ will imply the existence of $A:(B:C)$, and vice visa.
\end{rem}

Next, we provide two new properties. A characterization of the M-P invertibility of $A:B$ is as follows:
\begin{prop} Let $A,B\in {\cal L}(H)$ be parallel summable. Then the following two statements are equivalent:
 \begin{enumerate}
 \item[{\rm (i)}] $A$ and $B$ are both M-P invertible;
  \item[{\rm (ii)}]$A:B$ is M-P invertible.
 \end{enumerate}
\end{prop}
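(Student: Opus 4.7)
The strategy is to identify M-P invertibility with closedness of the range via Lemma~2.2, and to exploit the range formula $\mathcal{R}(A:B)=\mathcal{R}(A)\cap\mathcal{R}(B)$ provided by Proposition~4.2. This reduces the whole statement to a question about how the closedness of an intersection interacts with the closedness of the pieces, under the structural constraints supplied by parallel summability.

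For (i)$\Rightarrow$(ii): if $A$ and $B$ are M-P invertible, then $\mathcal{R}(A)$ and $\mathcal{R}(B)$ are closed submodules of $H$ by Lemma~2.2. Their set-theoretic intersection is then a closed submodule, and by Proposition~4.2 it equals $\mathcal{R}(A:B)$. A second application of Lemma~2.2 yields the M-P invertibility of $A:B$.

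For (ii)$\Rightarrow$(i): the plan is to exploit the parallel summability identities $A=(A+B)(A+B)^\dagger A=A(A+B)^\dagger(A+B)$ from Theorem~3.3 together with the decomposition $A=(A:B)+A(A+B)^\dagger A$ of Remark~3.2. Given a convergent sequence $Ax_n\to y\in\overline{\mathcal{R}(A)}$, set $z_n=(A+B)^\dagger Ax_n\in\mathcal{R}((A+B)^*)=\mathcal{N}(A+B)^\perp$. Since $A+B$ has closed range, it is bounded below on $\mathcal{N}(A+B)^\perp$; combined with $(A+B)z_n=Ax_n\to y$, this forces $z_n\to z$ for some $z\in\mathcal{N}(A+B)^\perp$, and $(A+B)z=y$. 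Thus $y=Az+Bz\in\mathcal{R}(A+B)$. The remaining step is to upgrade this to $y\in\mathcal{R}(A)$ by finding $w\in H$ with $Aw=Bz$, so that $y=A(z+w)$; this is where the closedness of $\mathcal{R}(A:B)=\mathcal{R}(A)\cap\mathcal{R}(B)$ and the explicit formula $A:B=A(A+B)^\dagger B$ should be brought in. Closedness of $\mathcal{R}(B)$ then follows either by the symmetric argument or at once from $A:B=B:A$ of Proposition~4.1.

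The main obstacle is the direction (ii)$\Rightarrow$(i). Abstractly, two submodules $M,N$ of $H$ with both $M\cap N$ and $M+N$ closed need not be closed individually, so the range formula alone cannot do the job: the proof must genuinely use the operator-theoretic structure of parallel summability, in particular the factorizations $A=W(A+B)=(A+B)S$ with $W=A(A+B)^\dagger$ and $S=(A+B)^\dagger A$, together with the interplay between the projections $(A+B)(A+B)^\dagger$ and $(A:B)(A:B)^\dagger$.
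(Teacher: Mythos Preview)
Your argument for (i)$\Rightarrow$(ii) is correct and is exactly the paper's proof.

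For (ii)$\Rightarrow$(i), however, your proposal is incomplete: you explicitly leave open the ``remaining step'' of producing $w$ with $Aw=Bz$, i.e.\ of showing $Bz\in\mathcal{R}(A)$, and your closing paragraph confirms that you have not found the missing idea. In fact the detour through the convergence of $z_n=(A+B)^\dagger Ax_n$ is unnecessary and obscures the simple mechanism. The paper argues directly from the identity $A:B=A-A(A+B)^\dagger A$ of Remark~3.2: if $Ax_n\to y$, then
\[
(A:B)x_n \;=\; Ax_n - A(A+B)^\dagger Ax_n \;\longrightarrow\; y - A(A+B)^\dagger y,
\]
and since $\mathcal{R}(A:B)$ is closed this limit lies in $\mathcal{R}(A:B)\subseteq\mathcal{R}(A)$. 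Hence $y=\big(y-A(A+B)^\dagger y\big)+A(A+B)^\dagger y\in\mathcal{R}(A)$, proving that $\mathcal{R}(A)$ is closed; the argument for $B$ is symmetric (or use $A:B=B:A$).

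It is worth noting that your framework was one line away from this: with $z=(A+B)^\dagger y$ you have $Bz=y-Az=y-A(A+B)^\dagger y$, which is precisely the limit above, so $Bz\in\mathcal{R}(A:B)\subseteq\mathcal{R}(A)$ immediately. The bounded-below argument for $z_n\to z$, while correct, plays no role.
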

\begin{proof}``(i)$\Longrightarrow$(ii)": Assume that $A$ and $B$ are both M-P invertible. Then both ${\cal R}(A)$ and ${\cal R}(B)$ are closed, hence $A:B$ has a closed range by (\ref{equ:the range of the parallel sum}), therefore $A:B$ is M-P invertible by Lemma~\ref{lem:Xu-Sheng condition of M-P invertible}.

``(ii)$\Longrightarrow$(i)": Assume that $A:B$ is M-P invertible. Then for any $y\in\overline{{\cal R}(B)}$, there exists a sequence  $\{x_n\}$ in $H$ such that $Bx_n\to y$ as $n\to +\infty$. It follows that
\begin{eqnarray*}&&(A:B)x_n=Bx_n-B(A+B)^\dag Bx_n\to y-B(A+B)^\dag y\stackrel{def}{=}z.\end{eqnarray*} Since ${\cal R}(A:B)$ is closed, we have
$z\in {\cal R}(A:B)\subseteq {\cal R}(B)$ and thus $$y=z+B(A+B)^\dag y\in {\cal R}(B).$$ This completes the proof of the closeness of ${\cal R}(B)$. The proof of the closeness of ${\cal R}(A)$ is similar.
\end{proof}

As an application of the parallel, we establish a proposition as follows:
\begin{prop}Let $M$ and $N$ be two orthogonally complemented closed submodules of $H$ such that $M+N$ is closed in $H$. Then both $M+N$ and $M\cap N$ are orthogonally complemented such that $(M\cap N)^\bot=M^\bot+N^\bot$.
\end{prop}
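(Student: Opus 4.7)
The plan is to introduce an auxiliary adjointable operator whose range is $M+N$, exploit Lemma~\ref{lem:orthogonal} to identify $(M+N)^\bot$, and then apply Proposition~\ref{prop:parallel of two projections} to deduce that $M\cap N$ is orthogonally complemented. Set $P=P_M$ and $Q=P_N$, let $M\oplus N$ denote the Hilbert $\mathfrak{A}$-module direct sum, and consider $T\colon M\oplus N\to H$ given by $T(m,n)=m+n$. A routine verification yields $T\in\mathcal{L}(M\oplus N,H)$ with adjoint $T^*x=(Px,Qx)$, so that $\mathcal{R}(T)=M+N$ and $\mathcal{N}(T^*)=M^\bot\cap N^\bot$. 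Since $M+N$ is closed by hypothesis, Lemma~\ref{lem:Xu-Sheng condition of M-P invertible} makes $T$ M-P invertible and Lemma~\ref{lem:orthogonal} yields $H=(M+N)\dotplus(M^\bot\cap N^\bot)$; in particular, $M+N$ is orthogonally complemented and $P_{M+N}$ exists.

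A second application of Lemma~\ref{lem:orthogonal} to $T$ gives $\mathcal{R}(P+Q)=\mathcal{R}(TT^*)=\mathcal{R}(T)=M+N$, so $P+Q$ is M-P invertible and $(P+Q)(P+Q)^\dag=P_{M+N}$. Proposition~\ref{prop:parallel of two projections} then applies and produces $P:Q=\frac{1}{2}P_{M\cap N}$, which in particular establishes that $M\cap N$ is orthogonally complemented in $H$.

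For the identity $(M\cap N)^\bot=M^\bot+N^\bot$, the inclusion $\supseteq$ is immediate from $M\cap N\subseteq M$ and $M\cap N\subseteq N$. For the reverse inclusion, let $x\in(M\cap N)^\bot$ and decompose $x=P_{M+N}x+(I_H-P_{M+N})x$; since $(M+N)^\bot=M^\bot\cap N^\bot$, the second summand already sits in $M^\bot\cap N^\bot\subseteq M^\bot+N^\bot$, so it suffices to treat $x_0:=P_{M+N}x$. Because $(I_H-P_{M+N})x\in(M+N)^\bot\subseteq(M\cap N)^\bot$, we have $x_0\in(M\cap N)^\bot$, hence $P_{M\cap N}x_0=0$; equivalently $(P:Q)x_0=0$ by Proposition~\ref{prop:parallel of two projections}, i.e.\ $P(P+Q)^\dag Qx_0=0$. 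Set $y:=(P+Q)^\dag Qx_0$; then $Py=0$, so $y\in M^\bot$, while the computation
\[
(P+Q)y=(P+Q)(P+Q)^\dag Qx_0=P_{M+N}(Qx_0)=Qx_0,
\]
in which the final equality uses $Qx_0\in N\subseteq M+N$, forces $Qy=Qx_0$; hence $x_0-y\in N^\bot$, and we conclude $x_0=y+(x_0-y)\in M^\bot+N^\bot$.

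The main obstacle is precisely this last step: it is the only place where the M-P invertibility of $P+Q$ enters essentially, through the identity $(P+Q)(P+Q)^\dag=P_{M+N}$ combined with Proposition~\ref{prop:parallel of two projections}. Everything else is a clean application of Lemma~\ref{lem:orthogonal} to the operator $T$, and one pleasant by-product of the argument is that $M^\bot+N^\bot$ is automatically closed (being equal to $(M\cap N)^\bot$), which one might otherwise have expected to require a separate argument.
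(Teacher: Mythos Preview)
Your proof is correct and follows essentially the same route as the paper's: both introduce an auxiliary adjointable operator to show $\mathcal{R}(P_M+P_N)=M+N$ is closed (the paper uses a $2\times2$ block on $H\oplus H$, you use $T:M\oplus N\to H$, but $TT^*=P+Q$ in both cases), invoke Proposition~\ref{prop:parallel of two projections} to obtain $P_{M\cap N}=2(P:Q)$, and then split an arbitrary $x\in(M\cap N)^\bot$ via an element of the form $(P+Q)^\dag P x$ or $(P+Q)^\dag Q x$. Your preliminary reduction to $x_0=P_{M+N}x$ is harmless but unnecessary---your own computation with $y=(P+Q)^\dag Qx$ works verbatim for $x$ in place of $x_0$, since $Qx\in N\subseteq M+N$ already; the paper proceeds directly without this reduction.
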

\begin{proof} Let $P_M$ and $P_N$ be the projections from $H$ onto $M$ and $N$, respectively. Put 
\begin{align*}\label{equ:inclusion of two projectons}T=\left(
        \begin{array}{cc}
          0 & 0 \\
          P_M & P_N \\
        \end{array}\right)\in \mathcal{L}(H\oplus H).
\end{align*}
Then clearly, 
\begin{equation*}\label{equ:relationship of the ranges of two operators-1}\mathcal{R}(T)=\{0\}\oplus (M+N)\ \mbox{and}\ \mathcal{R}(TT^*)=\{0\}\oplus \mathcal{R}(P_M+P_N).
\end{equation*}
By assumption we know that $\mathcal{R}(T)$ is closed, which means by Lemma~\ref{lem:orthogonal} that $\mathcal{R}(P_M+P_N)$ is closed such that $\mathcal{R}(P_M+P_N)=M+N$.
It follows from Lemma~\ref{lem:Xu-Sheng condition of M-P invertible} that $P_M+P_N$ is M-P invertible, hence $M+N$ is orthogonally complemented such that $P_{M+N}=(P_M+P_N)(P_M+P_N)^\dag$.
Furthermore, from Proposition~\ref{prop:parallel of two projections} we know that $M\cap N$ is also orthogonally complemented such that
$P_M:P_N=\frac12 P_{M\cap N}$.

Now, we show that $(M\cap N)^\bot=M^\bot+N^\bot$. Indeed, it is obvious that $M^\bot+N^\bot\subseteq (M\cap N)^\bot$. On the other hand, for any $x\in (M\cap N)^\bot$, we have
$$P_Mx-P_M(P_M+P_N)^\dag P_Mx=(P_M:P_N)x=\frac12 P_{M\cap N}x=0,$$
hence $x_1=x-(P_M+P_N)^\dag P_Mx\in M^\bot$. Note that
\begin{equation*}P_N(x-x_1)=P_N(P_M+P_N)^\dag P_Mx=(P_M:P_N)x=0,\end{equation*}
therefore $x=x_1+(x-x_1)\in M^\bot+N^\bot$. Since $x\in (M\cap N)^\bot$ is arbitrary, we have $(M\cap N)^\bot\subseteq M^\bot+N^\bot$.
\end{proof}

\section{A formula for the norm upper bound of the parallel sum}\label{sec:norm upper bound of the parallel sum}
Let $A,B\in\mathbb{C}^{n\times n}$ be Hermitian positive semi-definite matrices. Norm upper bound (\ref{equ:sharp upper bound for parallel sum wrt positive operators}) below was
 obtained in \cite[Theorem~25]{Anderson-Duffin} with the usage of the M-P inverses $A^\dag$ and $B^\dag$. The purpose of this section is to investigate
 whether  (\ref{equ:sharp upper bound for parallel sum wrt positive operators}) is true or not for general parallel summable operators $A$ and $B$ in ${\cal L}(H)$, where $H$ is a Hilbert $\mathfrak{A}$-module. Our first result is
 Theorem~\ref{thm:sharp upper bound for parallel sum wrt positive operators}, which indicates that  (\ref{equ:sharp upper bound for parallel sum wrt positive operators}) is  true if  $A$ and $B$ are both positive. The proof of Theorem~\ref{thm:sharp upper bound for parallel sum wrt positive operators} is carried out without any use of
 $A^\dag$ and $B^\dag$, since this two M-P inverses may not exist. A counterexample is constructed in Remark~\ref{rem:countable example concerning norm upper bound A:B}, which shows that (\ref{equ:sharp upper bound for parallel sum wrt positive operators}) might be false even if $A$ and $B$ are both self-adjoint. If however, an additional condition established in Theorem~\ref{thm:generalized result of norm upper bound A:B} is satisfied, then (\ref{equ:sharp upper bound for parallel sum wrt positive operators}) is valid for such $A$ and $B$ which need not to be self-adjoint. Thus, a generalization of Theorem~\ref{thm:sharp upper bound for parallel sum wrt positive operators} is obtained.

Let $A,B\in {\cal L}(H)$ be parallel summable. Trivially, $A:B=0$ if $A=0$ or $B=0$. So throughout the rest of this section, it is assumed that both $A$ and $B$ are non-zero. Throughout the rest of this section, $H$ is a Hilbert $\mathfrak{A}$-module.

\begin{defn}{\rm For any $a,b\in (0, +\infty)$, let $a:b$ be the parallel sum of $a$ and $b$ defined by
\begin{equation}\label{equ:defn of parallel sum for numbers}a:b=\frac{ab}{a+b}.\end{equation}
}\end{defn}
\begin{theorem}\label{thm:sharp upper bound for parallel sum wrt positive operators}{\rm (cf.\,\cite[Theorem~25]{Anderson-Duffin})} Let  $A,B\in \mathcal{L}(H)_+$ be such that  $A+B$ is M-P invertible. Let $\Vert A\Vert:\Vert B\Vert$
be defined by (\ref{equ:defn of parallel sum for numbers}). Then
\begin{equation}\label{equ:sharp upper bound for parallel sum wrt positive operators}\Vert A:B\Vert\le \Vert A\Vert:\Vert B\Vert.\end{equation}
\end{theorem}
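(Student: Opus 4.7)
The plan is to mimic the classical matrix proof of Anderson--Duffin (\cite[Theorem~25]{Anderson-Duffin}), which reads the norm bound off the identity $(A:B)^{-1}=A^{-1}+B^{-1}$ together with the order-reversing property of inversion in a $C^*$-algebra. The obstruction here is that $A$, $B$, and $A+B$ need not be invertible; this will be handled by a two-step reduction, first restricting to the closed submodule $M=\mathcal{R}(A+B)$ to make $A+B$ invertible, and then regularising $A$ and $B$ themselves by adding $\epsilon I$ and letting $\epsilon\to 0^+$ at the end.

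For the first reduction, set $P=(A+B)(A+B)^\dag$. By Theorem~\ref{thm:positive sum moore-penrose invertible implies parallel summable} the pair $(A,B)$ is parallel summable, so Theorem~\ref{thm:sufficient condition of parallel summable} combined with Remark~\ref{rem:one condition extended to be four conditions} and the self-adjointness of $A$ and $B$ yields $PA=AP=A$ and $PB=BP=B$. Hence $A$ and $B$ vanish on $(I-P)H$, $A:B=A(A+B)^\dag B$ is supported in $M=PH$, and $\|A\|=\|A|_M\|$, $\|B\|=\|B|_M\|$, $\|A:B\|=\|(A:B)|_M\|$. Moreover $(A+B)|_M\in\mathcal{L}(M)_+$ has closed range equal to $M$ and is injective there, hence is invertible in $\mathcal{L}(M)$. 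So after renaming we may assume $A+B$ is invertible in $\mathcal{L}(H)$.

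Assuming $A+B$ invertible, for $\epsilon>0$ set $A_\epsilon=A+\epsilon I$ and $B_\epsilon=B+\epsilon I$. Each of $A_\epsilon,B_\epsilon,A_\epsilon+B_\epsilon$ is bounded below by a positive multiple of $I$ in the $C^*$-algebra $\mathcal{L}(H)$, so all are positive and invertible. A direct computation gives
\[
(A_\epsilon:B_\epsilon)^{-1}=B_\epsilon^{-1}(A_\epsilon+B_\epsilon)A_\epsilon^{-1}=A_\epsilon^{-1}+B_\epsilon^{-1}.
\]
Since $A_\epsilon\le\|A_\epsilon\|I$ in the $C^*$-algebra, $A_\epsilon^{-1}\ge\|A_\epsilon\|^{-1}I$, and likewise $B_\epsilon^{-1}\ge\|B_\epsilon\|^{-1}I$. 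Summing and inverting (order-reversing on positive invertibles) gives
\[
A_\epsilon:B_\epsilon\le\frac{\|A_\epsilon\|\,\|B_\epsilon\|}{\|A_\epsilon\|+\|B_\epsilon\|}\,I,\qquad\text{hence}\qquad \|A_\epsilon:B_\epsilon\|\le\frac{\|A_\epsilon\|\,\|B_\epsilon\|}{\|A_\epsilon\|+\|B_\epsilon\|}.
\]

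Finally let $\epsilon\to 0^+$. By continuity of inversion on the open set of invertible elements of $\mathcal{L}(H)$, $(A_\epsilon+B_\epsilon)^{-1}\to(A+B)^{-1}$ in norm, so
\[
A_\epsilon:B_\epsilon=A_\epsilon-A_\epsilon(A_\epsilon+B_\epsilon)^{-1}A_\epsilon\ \longrightarrow\ A-A(A+B)^{-1}A=A:B
\]
in norm; together with $\|A_\epsilon\|=\|A\|+\epsilon$ and $\|B_\epsilon\|=\|B\|+\epsilon$, the bound passes to the limit to yield \eqref{equ:sharp upper bound for parallel sum wrt positive operators}. The main obstacle I expect is the first step: one must carefully extract from the parallel summability conditions the two-sided commutation $AP=PA=A$, $BP=PB=B$ and verify that all the relevant norms are preserved under restriction to $M$, since the usual Hilbert-space shortcuts (spectral theorem, polar decomposition) are unavailable in $\mathcal{L}(H)$. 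Once this bookkeeping is in place, everything afterwards is routine $C^*$-algebraic functional calculus.
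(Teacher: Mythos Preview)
Your argument is correct and takes a genuinely different route from the paper. The paper works directly at the Hilbert-module level: for each $x\in H$ it sets $u=[I_H-(A+B)^\dag A]x$, $v=[I_H-(A+B)^\dag B]x$, uses the elementary operator inequality $A\ge A^2/\|A\|$ (valid for any positive element of a $C^*$-algebra) to bound $\langle u,(A:B)x\rangle$ and $\langle v,(A:B)x\rangle$ from below, and then combines these via the identity $u+v=x+[I_H-(A+B)^\dag(A+B)]x$ to get $\langle(A:B)x,(A:B)x\rangle\le(\|A\|{:}\|B\|)\,\langle(A:B)x,x\rangle$. No reduction to an invertible situation and no $\epsilon$-regularisation are needed. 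Your approach instead reduces to the fully invertible case in two stages and then invokes the harmonic-mean identity $(A_\epsilon{:}B_\epsilon)^{-1}=A_\epsilon^{-1}+B_\epsilon^{-1}$ together with the operator-monotone decrease of $t\mapsto t^{-1}$; this is closer in spirit to the original Anderson--Duffin matrix proof. Your method has the merit of making the link with the classical argument transparent and of giving the stronger operator inequality $A_\epsilon{:}B_\epsilon\le(\|A_\epsilon\|{:}\|B_\epsilon\|)\,I$ before passing to norms; the paper's method is more self-contained (no need to restrict to $M$, verify that norms are preserved, or appeal to continuity of inversion) and avoids any limiting process. The bookkeeping in your first reduction is fine: since $A+B$ is self-adjoint, $(A+B)^\dag(A+B)=(A+B)(A+B)^\dag=P$, so the two equations in Remark~\ref{rem:one condition extended to be four conditions} immediately give $PA=AP=A$ and $PB=BP=B$, and the norm identities $\|A|_M\|=\|A\|$ etc.\ follow because $A(I-P)=0$.
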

\begin{proof}By Theorem~\ref{thm:positive sum moore-penrose invertible implies parallel summable}, we know that $A$ and $B$ are parallel summable such that $A:B\ge 0$.
 Since $A\ge 0$ and $B\ge 0$, we have
\begin{equation}\label{equ:technique inequality for positive operator} A\ge \frac{A^2}{\Vert A\Vert}\ \mbox{and}\ B\ge \frac{B^2}{\Vert B\Vert}.\end{equation}
For any $x\in H$, let
$$u=\left[I_H-(A+B)^\dag A\right]x\ \mbox{and}\ v=\left[I_H-(A+B)^\dag B\right]x.$$
In view of (\ref{equ:two sides A or two sides B}) and (\ref{equ:technique inequality for positive operator}), we have
\begin{eqnarray}&&\big<u,(A:B)x\big>=\big<u,Au\big>=\big<Au,u\big>\ge \frac{1}{\Vert A\Vert}\left<A^2u,u\right>\nonumber\\
\label{eqn:bigger than inverse of norm A}&&=\frac{1}{\Vert A\Vert}\left<Au,Au\right>=\frac{1}{\Vert A\Vert}\big<(A:B)x,(A:B)x\big>.
\end{eqnarray}
Similarly, it holds that
\begin{equation}\label{eqn:bigger than inverse of norm B}\big<v,(A:B)x\big>\ge \frac{1}{\Vert B\Vert}\big<(A:B)x,(A:B)x\big>.
\end{equation}
Note that $(A:B)\big[I_H-(A+B)^\dag (A+B)\big]=0$, so by (\ref{eqn:bigger than inverse of norm A}) and (\ref{eqn:bigger than inverse of norm B}) we have
\begin{eqnarray*}&&\big<(A:B)x,x\big>=\left<(A:B)\left[I_H+I_H-(A+B)^\dag (A+B)\right]x,x\right>\\
&&=\left<\left[I_H+I_H-(A+B)^\dag (A+B)\right]x,(A:B)x\right>=\big<u+v, (A:B)x\big>\\
&&=\big<u,(A:B)x\big>+\big<v,(A:B)x\big>\ge \left(\frac{1}{\Vert A\Vert}+\frac{1}{\Vert B\Vert}\right)\big<(A:B)x,(A:B)x\big>.
\end{eqnarray*}
Thus
\begin{equation*}\big<(A:B)x,(A:B)x\big>\le (\Vert A\Vert:\Vert B\Vert)\big<(A:B)x,x\big>,
\end{equation*}
which means that
\begin{eqnarray*}&&\Vert (A:B)x\Vert^2\le (\Vert A\Vert:\Vert B\Vert) \big\Vert \big<(A:B)x,x\big>\big\Vert\le
(\Vert A\Vert:\Vert B\Vert)\Vert (A:B)x\Vert\, \Vert x\Vert,
 \end{eqnarray*}
hence
\begin{equation*}\Vert (A:B)x\Vert\le (\Vert A\Vert:\Vert B\Vert)\, \Vert x\Vert, \ \mbox{for any $x\in H$}.
\end{equation*}
This completes the proof of (\ref{equ:sharp upper bound for parallel sum wrt positive operators}).
\end{proof}

\begin{defn}{\rm For any $T\in {\cal L}(H)$, let  $|T|\in \mathcal{L}(H)_+$ and $\rho(T)$ be self-adjoint defined by
\begin{equation}\label{equ:defn of rho(T)}|T|=(T^*T)^\frac12\ \mbox{and}\ \rho(T)=\left(
            \begin{array}{cc}
              0 & T \\
              T^* & 0 \\
            \end{array}
          \right)\in {\cal L}(H\oplus H).\end{equation}
It is proved in \cite[Lemma~4.1]{Xu-Wei-Gu} that $\Vert \rho(T)\Vert=\Vert T\Vert$.
}\end{defn}

\begin{lem}\label{lem:ando}{\rm \cite[Theorem~2.3]{Ando-Hayashi}} Let $K$ be a Hilbert space and $X,Y\in \mathbb{B}(K)$. Then the triangle equality
$|X + Y| = |X| + |Y|$ holds if and only if there exists a partial isometry $U\in\mathbb{B}(K)$ such that
$X = U|X|$ and $Y = U|Y|$.
\end{lem}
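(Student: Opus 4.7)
The plan is to handle the two directions separately, with the $(\Leftarrow)$ direction being a short computation and $(\Rightarrow)$ requiring a careful construction.

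For the "if" direction, assume $X = U|X|$ and $Y = U|Y|$ for some partial isometry $U \in \mathbb{B}(K)$. Squaring the first identity, $|X|^2 = X^*X = |X|U^*U|X|$, so $|X|(I-U^*U)|X| = 0$. Because $I-U^*U$ is a projection, this is equivalent to $((I-U^*U)^{1/2}|X|)^*((I-U^*U)^{1/2}|X|) = 0$, which forces $U^*U|X| = |X|$; by symmetry $U^*U|Y|=|Y|$. Consequently
\[
(X+Y)^*(X+Y) = (|X|+|Y|)U^*U(|X|+|Y|) = (|X|+|Y|)^2,
\]
and taking positive square roots (both sides are positive) gives $|X+Y| = |X|+|Y|$.

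For the "only if" direction the natural candidate is the partial isometry $U$ from the polar decomposition $X+Y = U|X+Y|$, whose initial space is $\overline{\mathcal{R}(|X+Y|)}$. I would first extract a scalar identity by testing $|X+Y|^2 = (|X|+|Y|)^2$ on vectors: for every $f \in K$, after expanding and using $\|Xf\|^2 = \||X|f\|^2$ and $\|Yf\|^2 = \||Y|f\|^2$, the cross terms reduce to
\[
\mathrm{Re}\,\langle Xf, Yf\rangle = \langle |X|f, |Y|f\rangle.
\]
Combined with Cauchy--Schwarz (both $|\langle Xf,Yf\rangle| \le \|Xf\|\,\|Yf\| = \||X|f\|\,\||Y|f\|$ and $\langle |X|f,|Y|f\rangle \le \||X|f\|\,\||Y|f\|$), this produces equality throughout Cauchy--Schwarz, forcing $Xf$ and $Yf$ to be aligned (one a non-negative scalar multiple of the other) whenever both are nonzero.

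This pointwise alignment is then used to define $U$ on $\mathcal{R}(|X|) + \mathcal{R}(|Y|)$ by $U(|X|f + |Y|g) := Xf + Yg$, extended continuously to $\overline{\mathcal{R}(|X+Y|)}$ and by zero on its orthogonal complement. The main obstacle is exactly this step: verifying that $U$ is well-defined (independent of the representative $f$ used to write a vector in $\mathcal{R}(|X|)$, which follows from $\mathcal{N}(|X|) = \mathcal{N}(X)$), and that the resulting map is isometric on $\overline{\mathcal{R}(|X|) + \mathcal{R}(|Y|)}$. Isometry reduces, via polarization, precisely to the alignment identity $\mathrm{Re}\,\langle Xf,Yg\rangle = \langle |X|f, |Y|g\rangle$ for all $f,g$, which in turn follows from the pointwise version applied to $f+g$ after another expansion. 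Once $U$ is shown to be a partial isometry, the construction makes $X = U|X|$ and $Y = U|Y|$ automatic, and a short verification confirms that $U$ coincides with the polar part of $X+Y$ on $\overline{\mathcal{R}(|X+Y|)}$.
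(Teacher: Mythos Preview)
The paper does not prove this lemma; it is quoted from \cite{Ando-Hayashi} and used as a black box. So there is no ``paper's proof'' to compare against, and I evaluate your sketch on its own.

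Your $(\Leftarrow)$ direction is correct.

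Your $(\Rightarrow)$ direction has a genuine gap. Squaring $|X+Y|=|X|+|Y|$ only yields the \emph{operator} identity
\[
X^*Y+Y^*X=|X|\,|Y|+|Y|\,|X|,
\]
i.e.\ the selfadjoint parts of $X^*Y$ and $|X|\,|Y|$ agree. On vectors this gives merely
\[
\mathrm{Re}\,\langle Xf,Yf\rangle=\mathrm{Re}\,\langle |X|f,|Y|f\rangle,
\]
not the equality you wrote (the right-hand side need not be real), and it does \emph{not} force equality in Cauchy--Schwarz: having $\mathrm{Re}\,a=\mathrm{Re}\,b$ with $|a|,|b|\le M$ says nothing about $|a|=M$. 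So the ``pointwise alignment'' of $Xf$ and $Yf$ is not established. The polarization step fails for the same reason: substituting $f+g$ into the diagonal identity produces only the symmetrized relation
\[
\mathrm{Re}\,\langle Xf,Yg\rangle+\mathrm{Re}\,\langle Xg,Yf\rangle
=\mathrm{Re}\,\langle |X|f,|Y|g\rangle+\mathrm{Re}\,\langle |X|g,|Y|f\rangle,
\]
which does not separate into the off-diagonal identity $\mathrm{Re}\,\langle Xf,Yg\rangle=\mathrm{Re}\,\langle |X|f,|Y|g\rangle$ that your isometry argument needs.

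What is actually required for your construction to go through is the \emph{full} identity $X^*Y=|X|\,|Y|$ (equivalently, that the skew-adjoint part $X^*Y-|X|\,|Y|$ vanishes, not just its selfadjoint part). Once that is known, $\langle Xf,Yg\rangle=\langle |X|f,|Y|g\rangle$ for all $f,g$, and your map $|X|f+|Y|g\mapsto Xf+Yg$ is automatically well-defined and isometric. Proving $X^*Y=|X|\,|Y|$ is exactly the nontrivial step in Ando--Hayashi's argument, and it needs an idea beyond squaring and polarization (their proof passes through a $2\times 2$ operator-matrix positivity argument). Your sketch does not supply this idea.
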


\begin{lem}\label{lem:norm middle-result for Hilbert space}Let $K$ be a Hilbert space and $A,B\in \mathbb{B}(K)$ be such that $|A+B|=|A|+|B|$. Then $A+B$ is M-P invertible if and only if
$|A|+|B|$ is M-P invertible. In such case, $A$ and $B$ are parallel summable, and norm upper bound (\ref{equ:sharp upper bound for parallel sum wrt positive operators}) is valid.
\end{lem}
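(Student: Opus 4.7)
The plan is to reduce the statement to the positive case (Theorem \ref{thm:sharp upper bound for parallel sum wrt positive operators}) by exploiting the common polar factor furnished by Lemma \ref{lem:ando}. The hypothesis $|A+B|=|A|+|B|$ produces a partial isometry $U\in\mathbb{B}(K)$ with $A=U|A|$ and $B=U|B|$, hence $A+B=U(|A|+|B|)=U|A+B|$. Since $|A|,|B|\leq|A|+|B|$ forces $\overline{\mathcal{R}(|A|)},\overline{\mathcal{R}(|B|)}\subseteq\overline{\mathcal{R}(|A|+|B|)}$, I may replace $U$ by the canonical polar-decomposition partial isometry of $A+B$ (the two agree on $\overline{\mathcal{R}(|A|+|B|)}$), so without loss of generality the initial space of $U$ equals $\overline{\mathcal{R}(|A|+|B|)}$, meaning $U^{*}U=P_{\overline{\mathcal{R}(|A|+|B|)}}$ and $U$ is isometric on this subspace.

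For the M-P invertibility equivalence, Lemma \ref{lem:Xu-Sheng condition of M-P invertible} reduces matters to range closedness, and Lemma \ref{lem:orthogonal} gives
\[
\mathcal{R}(A+B)\ \text{closed}\iff\mathcal{R}((A+B)^{*}(A+B))=\mathcal{R}\bigl((|A|+|B|)^{2}\bigr)\ \text{closed}\iff\mathcal{R}(|A|+|B|)\ \text{closed},
\]
the last equivalence being Lemma \ref{lem:orthogonal} applied to the positive operator $|A|+|B|$.

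Granting both are M-P invertible, I would verify parallel summability via the null-space reformulation in Remark \ref{rem:one condition extended to be four conditions}. If $(A+B)x=0$, then $(|A|+|B|)x$ lies in the initial space of $U$ where $U$ is isometric, so $(|A|+|B|)x=0$; positivity of $|A|,|B|$ then forces $|A|x=|B|x=0$, hence $Ax=Bx=0$. Thus $\mathcal{N}(A+B)\subseteq\mathcal{N}(A)\cap\mathcal{N}(B)$. The dual inclusion $\mathcal{N}((A+B)^{*})\subseteq\mathcal{N}(A^{*})\cap\mathcal{N}(B^{*})$ follows identically from $(A+B)^{*}=(|A|+|B|)U^{*}$. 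By closedness of $\mathcal{R}((A+B)^{*})$ and $\mathcal{R}(A+B)$, these null-space inclusions translate into the range conditions $\mathcal{R}(A^{*}),\mathcal{R}(B^{*})\subseteq\mathcal{R}((A+B)^{*})$ and $\mathcal{R}(A),\mathcal{R}(B)\subseteq\mathcal{R}(A+B)$ demanded by Remark \ref{rem:one condition extended to be four conditions}, so Theorem \ref{thm:sufficient condition of parallel summable} applies.

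For the norm bound, the computation rests on the identity $(A+B)^{\dag}=(|A|+|B|)^{\dag}U^{*}$, which is a direct check of the four Penrose equations using $U^{*}U=P_{\overline{\mathcal{R}(|A|+|B|)}}$ and the fact that this projection fixes both $\mathcal{R}(|A|)$ and $\mathcal{R}(|B|)$. Substitution yields
\[
A{:}B=U|A|(|A|+|B|)^{\dag}U^{*}U|B|=U|A|(|A|+|B|)^{\dag}|B|=U(|A|{:}|B|).
\]
Since $\mathcal{R}(|A|{:}|B|)=\mathcal{R}(|A|)\cap\mathcal{R}(|B|)\subseteq\overline{\mathcal{R}(|A|+|B|)}$ by Proposition \ref{prop:range of parallel sum}, and $U$ is isometric on this subspace, $\|A{:}B\|=\||A|{:}|B|\|$. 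Applying Theorem \ref{thm:sharp upper bound for parallel sum wrt positive operators} to the positive pair $|A|,|B|$ (whose sum is M-P invertible) delivers $\||A|{:}|B|\|\leq\||A|\|{:}\||B|\|=\|A\|{:}\|B\|$, closing the argument. The main technical subtlety is the opening housekeeping step of identifying the Ando--Hayashi partial isometry with the polar factor of $A+B$; once that is secured, both the range analysis for parallel summability and the reduction to the positive case for the norm estimate proceed mechanically.
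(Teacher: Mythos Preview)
Your proof is correct and follows essentially the same strategy as the paper: invoke Lemma~\ref{lem:ando} to obtain a common polar factor $U$, establish that $U^{*}U$ acts as the identity on $\overline{\mathcal{R}(|A|+|B|)}$ (you do this by replacing $U$ with the canonical polar factor of $A+B$; the paper instead verifies $U^{*}U|A|=|A|$ and $U^{*}U|B|=|B|$ directly), derive $(A+B)^{\dag}=(|A|+|B|)^{\dag}U^{*}$, and reduce both the parallel summability and the norm bound to the positive case via $A{:}B=U(|A|{:}|B|)$ and Theorem~\ref{thm:sharp upper bound for parallel sum wrt positive operators}. The only cosmetic deviations are that you handle the M-P invertibility equivalence through range closedness (Lemma~\ref{lem:orthogonal}) rather than checking the Penrose equations, and you verify parallel summability via the null-space inclusions $\mathcal{N}(A+B)\subseteq\mathcal{N}(A)\cap\mathcal{N}(B)$ and its dual rather than by left-multiplying the positive-case identities by $U$; both routes are equivalent and equally short.
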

\begin{proof}(1) By Lemma~\ref{lem:ando},
there exists a partial isometry $U\in\mathbb{B}(K)$ such that $A=U|A|$ and $B=U|B|$. For any $x\in K$, we have
$$\big<|A|x,|A|x\big>=\big<Ax,Ax\big>=\big<U|A|x,U|A|x\big>=\big<U^*U|A|x,|A|x\big>,$$ and thus
$\big<(I_K-U^*U)|A|x,|A|x\big>=0$, which implies that $U^*U|A|=|A|$ since $I_H-U^*U$ is a projection. It follows that $U^*A=U^*U|A|=|A|$. Similarly, we have
$U^*B=|B|$. As a result, $UU^*A=U|A|=A$ and $UU^*B=U|B|=B$.

Based on the  observation above, it can be verify directly by (\ref{equ:m-p inverse}) that
$A+B$ is M-P invertible if and only if
$|A|+|B|$ is M-P invertible. In such case, it holds that
\begin{equation}\label{equ:relationship between the two M-P inverses}(|A|+|B|)^\dag=(A+B)^\dag U\ \mbox{and}\ (A+B)^\dag=(|A|+|B|)^\dag U^*.\end{equation}

(2) Suppose that  $|A|+|B|$ is M-P invertible. Then by Theorems~\ref{thm:sufficient condition of parallel summable}, \ref{thm:positive sum moore-penrose invertible implies parallel summable}
and \ref{thm:sharp upper bound for parallel sum wrt positive operators}, we have
\begin{eqnarray}&&\Sigma_1=|A|\cdot \big[I_K-(|A|+|B|)^\dag (|A|+|B|)\big]=0,\nonumber\\
&&\Sigma_2=\big[I_K-(|A|+|B|)(|A|+|B|)^\dag\big]\cdot |B|=0,\nonumber\\
\label{eqn:|A+B|=|A|+|B|-Hilbert space}&&\big\Vert |A|:|B|\big\Vert\le\big\Vert |A|\big\Vert:\big\Vert |B|\big\Vert=\Vert A\Vert:\Vert B\Vert.
\end{eqnarray}
Eq.\,(\ref{equ:relationship between the two M-P inverses}) together with the relationship between $A$ and $|A|$, $B$ and $|B|$,  gives
\begin{eqnarray*}&&A\big[I_K-(A+B)^\dag (A+B)\big]=U\Sigma_1=0,\\
&&\big[I_K-(A+B)(A+B)^\dag\big]B=U\Sigma_2=0.
\end{eqnarray*}
In other words, Eq.\,(\ref{equ:conditions of parallel summable}) is satisfied and thus $A$ and $B$ are parallel summable.
Furthermore, we have
\begin{eqnarray*}\Vert A:B\Vert=\Vert A(A+B)^\dag B\Vert=\Vert U(|A|:|B|)\Vert=\big\Vert |A|:|B|\big\Vert.
\end{eqnarray*}
The equation above together with (\ref{eqn:|A+B|=|A|+|B|-Hilbert space}) yields (\ref{equ:sharp upper bound for parallel sum wrt positive operators}).
\end{proof}

A generalization of Theorem~\ref{thm:sharp upper bound for parallel sum wrt positive operators} is as follows:
\begin{theorem}\label{thm:generalized result of norm upper bound A:B}Let  $A,B\in {\cal L}(H)$ be such that $|A+B|=|A|+|B|$ and  $A+B$ is M-P invertible. Then
$A$ and $B$ are parallel summable, and norm upper bound (\ref{equ:sharp upper bound for parallel sum wrt positive operators}) is valid.
\end{theorem}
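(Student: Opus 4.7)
The strategy is to reduce the theorem to the Hilbert-space situation already handled by Lemma~\ref{lem:norm middle-result for Hilbert space}. Since $\mathcal{L}(H)$ is a $C^*$-algebra, by Gelfand--Naimark it admits a faithful (hence isometric) $*$-representation $\pi:\mathcal{L}(H)\to\mathbb{B}(K)$ on some Hilbert space $K$. I would push the hypotheses on $A$ and $B$ forward through $\pi$, apply Lemma~\ref{lem:norm middle-result for Hilbert space} in $\mathbb{B}(K)$, and then pull the conclusion back using faithfulness.

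The key compatibilities to check are that $\pi$ respects both the absolute value and the Moore--Penrose inverse. The first is immediate, since $|T|=(T^*T)^{1/2}$ is defined by continuous functional calculus and $\pi$ is a $*$-homomorphism, so $\pi(|T|)=|\pi(T)|$ for every $T\in\mathcal{L}(H)$; in particular the assumption $|A+B|=|A|+|B|$ transports to $|\pi(A+B)|=|\pi(A)|+|\pi(B)|$. For the second, observe that the four defining identities \eqref{equ:m-p inverse} for $(A+B)^\dag$ are purely algebraic, so $\pi\bigl((A+B)^\dag\bigr)$ satisfies the analogous identities for $\pi(A+B)$; by uniqueness of the M-P inverse in $\mathbb{B}(K)$, $\pi(A+B)$ is M-P invertible with $\pi(A+B)^\dag=\pi\bigl((A+B)^\dag\bigr)$.

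With these identifications, Lemma~\ref{lem:norm middle-result for Hilbert space} applied to $\pi(A),\pi(B)\in\mathbb{B}(K)$ gives that $\pi(A)$ and $\pi(B)$ are parallel summable, i.e.
\begin{equation*}
\pi(A)\bigl[I_K-\pi(A+B)^\dag\pi(A+B)\bigr]=0,\quad
\bigl[I_K-\pi(A+B)\pi(A+B)^\dag\bigr]\pi(B)=0,
\end{equation*}
and $\Vert \pi(A):\pi(B)\Vert\le\Vert \pi(A)\Vert:\Vert \pi(B)\Vert$. Rewriting each product inside a single $\pi(\,\cdot\,)$ and using that $\pi$ is injective, I recover
\begin{equation*}
A\bigl[I_H-(A+B)^\dag(A+B)\bigr]=0, \quad \bigl[I_H-(A+B)(A+B)^\dag\bigr]B=0,
\end{equation*}
which is exactly \eqref{equ:conditions of parallel summable}; hence by Theorem~\ref{thm:sufficient condition of parallel summable}, $A$ and $B$ are parallel summable. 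Because $\pi$ is isometric,
\begin{equation*}
\Vert A:B\Vert=\Vert \pi\bigl(A(A+B)^\dag B\bigr)\Vert=\Vert \pi(A):\pi(B)\Vert\le\Vert \pi(A)\Vert:\Vert \pi(B)\Vert=\Vert A\Vert:\Vert B\Vert,
\end{equation*}
which is \eqref{equ:sharp upper bound for parallel sum wrt positive operators}.

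The only delicate point is the first compatibility step: one must be sure that $|\,\cdot\,|$ and $(\,\cdot\,)^\dag$ commute with any faithful $*$-representation, but both reduce to standard facts (continuous functional calculus and uniqueness of the M-P inverse). With that in place, the rest is bookkeeping, and Lemma~\ref{lem:norm middle-result for Hilbert space} carries the analytic content.
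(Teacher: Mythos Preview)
Your proposal is correct and follows essentially the same route as the paper's own proof: pass through a faithful $*$-representation $\pi:\mathcal{L}(H)\to\mathbb{B}(K)$, transport the hypotheses using $\pi(|T|)=|\pi(T)|$ and $\pi(T^\dag)=\pi(T)^\dag$, invoke Lemma~\ref{lem:norm middle-result for Hilbert space}, and pull back via faithfulness and isometry. The only cosmetic difference is that the paper explicitly replaces $K$ by $\pi(I_H)K$ to make $\pi$ unital before writing $I_K$ in the displayed identities, whereas you leave this implicit; either way the argument goes through.
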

\begin{proof}Note that ${\cal L}(H)$ is a $C^*$-algebra, so there exist  a Hilbert space $K$  and a
$C^*$-morphism $\pi: {\cal L}(H)\to \mathbb{B}(K)$ such that
$\pi$ is faithful \cite[Corollary~3.7.5]{Pedersen}. Replacing $K$ with $\pi\big(I_H)K$ if
necessary, we may assume that $\pi$ is unital, that is, $\pi(I_H)=I_K$. It is obvious that $|\pi(T)|=\pi(|T|)$ for any $T\in {\cal L}(H)$, and if
$T$ is M-P invertible in ${\cal L}(H)$, then  $\pi(T)$ is M-P invertible in $\mathbb{B}(K)$
such that $\pi(T)^\dag=\pi(T^\dag)$.
Thus, \begin{eqnarray*}|\pi(A)+\pi(B)|&=&|\pi(A+B)|=\pi(|A+B|)=\pi(|A|+|B|)\\
&=&\pi(|A|)+\pi(|B|)=|\pi(A)|+|\pi(B)|.
\end{eqnarray*}
Furthermore, $\pi(A)+\pi(B)=\pi(A+B)\in \mathbb{B}(K)$ is M-P invertible since $(A+B)\in {\cal L}(H)$ is M-P invertible.
It follows from Lemma~\ref{lem:norm middle-result for Hilbert space} that $\pi(A)$ and $\pi(B)$ are parallel summable, which means that
$\pi(\Lambda_1)=0$ and $\pi(\Lambda_2)=0$, where
\begin{eqnarray*}&&\Lambda_1=A\big[I_H-(A+B)^\dag (A+B)\big], \Lambda_2=\big[I_H-(A+B)(A+B)^\dag\big]B.
\end{eqnarray*}
As $\pi$ is faithful, we have $\Lambda_1=0$ and $\Lambda_2=0$, hence Eq.\,(\ref{equ:conditions of parallel summable}) is satisfied and thus $A$ and $B$ are parallel summable.

Finally, by Lemma~\ref{lem:norm middle-result for Hilbert space} we have
\begin{eqnarray*}\Vert A:B\Vert&=&\Vert A(A+B)^\dag B\Vert=\big\Vert \pi\big(A(A+B)^\dag B\big)\big\Vert\\
&=&\big\Vert \pi(A)\big(\pi(A+B)\big)^\dag \pi(B)\big\Vert=\big\Vert \pi(A)\big(\pi(A)+\pi(B)\big)^\dag \pi(B)\big\Vert\\
&=&\Vert \pi(A):\pi(B)\Vert\le \Vert \pi(A)\Vert:\Vert \pi(B)\Vert=\Vert A\Vert:\Vert B\Vert.\qedhere
\end{eqnarray*}
\end{proof}

\begin{rem}\label{rem:countable example concerning norm upper bound A:B}{\rm There exist Hilbert
space  $H$ and $A,B\in \mathbb{B}(H)$ such that $A=A^*, B=B^*$ and $A,B,A+B$ are all M-P invertible, whereas (\ref{equ:sharp upper bound for parallel sum wrt positive operators}) is not valid. Such an example is as follows:

Let $K$ be any Hilbert space with dim$(K)\ge 1$, and $e$ be any non-zero element of $K$. Let $P$ be the projection from $K$ onto the (closed) linear subspace spanned by $e$. Put $T=iP$ and $S=I_K-T$, where $i^2=-1$. Clearly, $T^\dag=-T$, $S^{-1}=I_K+\frac{i-1}{2}P$, $\Vert T\Vert=1$ and
\begin{eqnarray*}&&\Vert S\Vert^2=\Vert S^*S\Vert=\Vert I_K+P\Vert=2,
\end{eqnarray*}
hence $\Vert S\Vert=\sqrt{2}$. Furthermore,  $\Vert TS\Vert=\Vert (1+i)P\vert=\sqrt{2}$.

Let $H=K\oplus K$, and $A=\rho(T), B=\rho(S)\in \mathbb{B}(H)$ be defined by (\ref{equ:defn of rho(T)}). Then $$A^\dag=\left(
                                                 \begin{array}{cc}
                                                   0 & T \\
                                                   T^* & 0 \\
                                                 \end{array}
                                               \right),B^\dag=\left(
                                                 \begin{array}{cc}
                                                   0 & \big(S^{-1}\big)^*\\
                                                   S^{-1} & 0 \\
                                                 \end{array}
                                               \right), (A+B)^{-1}=\left(
                                                                     \begin{array}{cc}
                                                                       0 & I_K \\
                                                                       I_K & 0 \\
                                                                     \end{array}
                                                                   \right).
                                               $$
So
$$A:B=A(A+B)^\dag B=\left(
        \begin{array}{cc}
          0 & TS \\
          T^*S^* & 0 \\
        \end{array}
      \right)=\left(
        \begin{array}{cc}
          0 & TS \\
          S^*T^* & 0 \\
        \end{array}
      \right)=\rho(TS),
$$
hence $\Vert A:B\Vert=\Vert TS\Vert=\sqrt{2}$. As $\Vert A\Vert=\Vert T\Vert=1$ and $\Vert B\Vert=\Vert S\Vert=\sqrt{2}$, norm upper bound
(\ref{equ:sharp upper bound for parallel sum wrt positive operators}) is not valid for such $A$ and $B$.
}\end{rem}

\section{A  further remark
on the parallel sum for positive operators}\label{sec:counterexample}
In our previous sections, the parallel sum for adjointable positive operators $A$ and $B$ are defined under the  precondition that $A+B$ is M-P invertible. As shown in \cite{Fillmore-Williams}, such an additional assumption is actually redundant in the Hilbert space case. Here is the details:

Let $H$ be a Hilbert space and $T\in \mathbb{B}(H)$. By Douglas Theorem \cite[Theorem~1]{Douglas}, it holds that
\begin{equation}\label{R(T)=R((TT*)^{1/2})}{\cal R}(T)={\cal R}((TT^*)^{1/2}).
\end{equation}
Now, given any $A, B\in \mathbb{B}(H)_+$, let
\begin{equation*}T=\left(
        \begin{array}{cc}
          0 & 0 \\
          A^{1/2} & B^{1/2} \\
        \end{array}\right)\in \mathbb{B}(H\oplus H).
\end{equation*}
Direct application of (\ref{R(T)=R((TT*)^{1/2})}) yields the following proposition:
\begin{prop}Let $H$ be a Hilbert space and $A, B\in \mathbb{B}(H)_+$. Then
\begin{equation}\label{equ:rang of A squared is contained in that of squared A plus B}{\cal R}(A^{1/2})+{\cal R}(B^{1/2})={\cal R}((A+B)^{1/2}).
\end{equation}
\end{prop}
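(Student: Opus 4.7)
The plan is to simply unpack the definition of $T$ and apply the Douglas range formula $\mathcal{R}(T) = \mathcal{R}((TT^*)^{1/2})$ directly to the block operator introduced just before the statement. The proof is essentially a one-line computation once both sides of \eqref{R(T)=R((TT*)^{1/2})} have been identified with their analogues in the direct sum $H \oplus H$.

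First I would describe $\mathcal{R}(T)$ explicitly: a vector $(u,v) \in H\oplus H$ lies in $\mathcal{R}(T)$ iff there exist $x,y \in H$ with $u=0$ and $v = A^{1/2}x + B^{1/2}y$. Hence
\[
\mathcal{R}(T) = \{0\} \oplus \bigl(\mathcal{R}(A^{1/2}) + \mathcal{R}(B^{1/2})\bigr).
\]
Next I would compute the block product
\[
TT^* = \begin{pmatrix} 0 & 0 \\ A^{1/2} & B^{1/2} \end{pmatrix} \begin{pmatrix} 0 & A^{1/2} \\ 0 & B^{1/2} \end{pmatrix} = \begin{pmatrix} 0 & 0 \\ 0 & A+B \end{pmatrix},
\]
using that $A$ and $B$ are self-adjoint so $(A^{1/2})^* = A^{1/2}$ and likewise for $B$. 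Taking the square root of this block-diagonal positive operator gives
\[
(TT^*)^{1/2} = \begin{pmatrix} 0 & 0 \\ 0 & (A+B)^{1/2} \end{pmatrix},
\]
so $\mathcal{R}((TT^*)^{1/2}) = \{0\} \oplus \mathcal{R}((A+B)^{1/2})$.

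Finally I would invoke Douglas's theorem in the form \eqref{R(T)=R((TT*)^{1/2})} applied to $T \in \mathbb{B}(H \oplus H)$: equating the two descriptions of $\mathcal{R}(T)$ and $\mathcal{R}((TT^*)^{1/2})$ and projecting onto the second coordinate yields precisely \eqref{equ:rang of A squared is contained in that of squared A plus B}. There is no real obstacle here; the only thing to verify is that the square root of the diagonal block $\operatorname{diag}(0,A+B)$ is $\operatorname{diag}(0,(A+B)^{1/2})$, which is immediate from the uniqueness of the positive square root (or from functional calculus applied to $A+B \in \mathbb{B}(H)_+$). The point of the proposition is precisely that in the Hilbert space setting Douglas's theorem does the work; as foreshadowed in the introduction, the counterexample of Section~\ref{sec:counterexample} will show that the analogous identity can fail in the Hilbert $C^*$-module setting, since Douglas's theorem itself may fail there.
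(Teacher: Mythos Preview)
Your proposal is correct and is exactly the argument the paper has in mind: the paper introduces the same block operator $T$ and simply says ``Direct application of \eqref{R(T)=R((TT*)^{1/2})} yields the following proposition,'' and you have spelled out that direct application in full.
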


It follows from \eqref{equ:rang of A squared is contained in that of squared A plus B} and Douglas Theorem that
there are uniquely determined operators $C, D\in \mathbb{B}(H)$ such that
\begin{eqnarray*}A^{1/2}=(A+B)^{1/2}C \ \mbox{and}\ {\cal R}(C)\subseteq {\cal N}((A+B)^{1/2})^{\perp},\\
B^{1/2}=(A+B)^{1/2}D \ \mbox{and}\ {\cal R}(D)\subseteq {\cal N}((A+B)^{1/2})^{\perp}.
\end{eqnarray*}

\begin{defn}{\rm \cite[P.277]{Fillmore-Williams}} Let $H$ be a Hilbert space and $A, B\in \mathbb{B}(H)_+.$ The parallel sum of $A$ and $B$ is defined by
\begin{equation*}A:B=A^{1/2}C^*DB^{1/2}.
\end{equation*}
\end{defn}
Note that if $A+B$ is M-P invertible, then it is easy to verify that $$C=\left((A+B)^\dag\right)^\frac12\cdot A^\frac12\ \mbox{and}\ D=\left((A+B)^\dag\right)^\frac12\cdot B^\frac12,$$
so in this case we have $A^{1/2}C^*DB^{1/2}=A(A+B)^\dag B$. Thus, the term of the parallel sum $A:B$ is generalized for positive operators $A$ and $B$  without any restrictions on $\mathcal{R}(A+B)$. The proposition below indicates that the same is not true in the Hilbert $C^*$-module case.

\begin{prop}\label{prop:conuterexample}
There exist a Hilbert $C^*$-module $H$ and $A, B\in \cal{L}(H)_+$ such that the operator equation
\begin{equation}\label{equ:operator equation-no solution}A^{1/2}=(A+B)^{1/2}X, \ X\in \cal{L}(H)\end{equation}
has no solution.
\end{prop}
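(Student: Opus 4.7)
The plan is to construct the counterexample in the simplest possible setting, a commutative Hilbert $C^*$-module where adjointable operators reduce to multiplications and the operator equation becomes a pointwise division problem. This is exactly the mechanism that causes Douglas' theorem to fail over Hilbert $C^*$-modules, as alluded to in the introduction via \cite[Theorem~3.2]{Fang-Moslehian-Xu}, and it can be exploited almost verbatim to break the identity $\mathcal{R}(A^{1/2})\subseteq\mathcal{R}((A+B)^{1/2})$ in the operator-equation form.

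Concretely, I would take $\mathfrak{A}=C[0,1]$ and $H=\mathfrak{A}$ regarded as a Hilbert module over itself with $\langle f,g\rangle=\overline{f}\,g$. Since any $T\in\mathcal{L}(H)$ satisfies $T(h)=T(1)\cdot h$ by $\mathfrak{A}$-linearity, there is a canonical identification $\mathcal{L}(H)\cong\mathfrak{A}$ under which positive operators correspond to non-negative functions in $C[0,1]$ and the continuous functional calculus is pointwise. I would then choose the oscillating pair
\begin{equation*}
a(t)=t^{2}\sin^{2}(1/t),\qquad b(t)=t^{2}\cos^{2}(1/t)\qquad (t\in(0,1]),
\end{equation*}
extended by $0$ at $t=0$. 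Both $a$ and $b$ belong to $C[0,1]_{+}$, and $a+b$ equals the function $t\mapsto t^{2}$. Let $A$ and $B$ denote the corresponding multiplication operators in $\mathcal{L}(H)_{+}$.

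With this choice $(A+B)^{1/2}$ is multiplication by $t$ and $A^{1/2}$ is multiplication by the continuous function $t\mapsto t|\sin(1/t)|$. If the equation $A^{1/2}=(A+B)^{1/2}X$ admitted a solution $X\in\mathcal{L}(H)$, then $X$ would correspond to some $x\in C[0,1]$ with $t\,x(t)=t\,|\sin(1/t)|$ for every $t\in[0,1]$. Dividing by $t$ on $(0,1]$ forces $x(t)=|\sin(1/t)|$, which has no limit as $t\to 0^{+}$, contradicting the continuity of $x$ at the origin. The only substantive ingredients are the identification $\mathcal{L}(H)\cong\mathfrak{A}$ and the fact that $A^{1/2}$ is multiplication by $\sqrt{a}$, both routine. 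I expect the only real obstacle to be the engineering of $a$ and $b$ so that $a+b$ admits a clean continuous square root while the quotient $\sqrt{a}/\sqrt{a+b}$ fails to be continuous at the origin; the Pythagorean partition $1=\sin^{2}(1/t)+\cos^{2}(1/t)$ is tailor-made for this purpose.
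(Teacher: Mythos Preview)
Your argument is correct, and it takes a genuinely different route from the paper. Both proofs use the same structural shortcut: take $H=\mathfrak{A}$ for a unital $C^*$-algebra $\mathfrak{A}$, so that $\mathcal{L}(H)\cong\mathfrak{A}$ via the left regular representation and the operator equation collapses to an equation in $\mathfrak{A}$. The difference lies in the choice of $\mathfrak{A}$. The paper works with the non-commutative algebra $\mathfrak{A}=\mathbb{C}(E)\dotplus\mathbb{C}I$ of compact-plus-scalar operators on a separable Hilbert space $E$, and picks diagonal operators $A_1,B_1$ supported on the odd and even basis vectors respectively; the putative solution is then forced to be the projection onto the odd-indexed span, which is not compact-plus-scalar. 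Your commutative choice $\mathfrak{A}=C[0,1]$ with the Pythagorean pair $a=t^{2}\sin^{2}(1/t)$, $b=t^{2}\cos^{2}(1/t)$ is more elementary: functional calculus is literally pointwise, and the obstruction is the visible discontinuity of $|\sin(1/t)|$ at the origin. What your approach buys is transparency and minimal prerequisites; what the paper's approach buys is a reminder that the phenomenon is not tied to commutativity and already occurs in algebras very close to $\mathbb{B}(E)$.
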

\begin{proof} Let $E$ be any countably infinite-dimensional Hilbert space and let $\mathbb{B}(E)$(resp.\,$\mathbb{C}(E)$) be the set of all bounded (resp.\,compact) linear operators on $E$. Given any orthogonal normalized basis $\{e_n|n\in \mathbb{N}\}$ for $E$, let $A_1, B_1\in \mathbb{C}(E)$ be defined by
\begin{eqnarray}\label{defn:counterexample A1}&&A_1(e_{2n-1})=\frac{e_{2n-1}}{2n-1} \ \mbox{and}\ A_1(e_{2n})=0,\\
\label{defn:counterexample B1}&&B_1(e_{2n-1})=0 \ \mbox{and}\ B_1(e_{2n})=\frac{e_{2n}}{2n}
\end{eqnarray}
for any $n\in \mathbb{N}$.
Let $\mathfrak{A}=\mathbb{C}(E)\dotplus \mathbb{C}I$, where $I$ is the identity operator on $E$. Then $\mathfrak{A}$ is a unital $C^*$-algebra, which itself is a Hilbert $\mathfrak{A}$-module with the inner product given by
\begin{equation}\big<X,Y\big>=X^*Y\, \mbox{for any}\, X,Y\in \mathfrak{A}.
\end{equation}

Given any $a\in \mathfrak{A}$, let $L_a:\mathfrak{A}\to \mathfrak{A}$  be the left regular representation defined by
$L_a(x)=ax$ for any $x\in \mathfrak{A}$. Then clearly, $L_a\in\mathcal{L}(\mathfrak{A})$ such that $(L_a)^*=L_{a^*}$. Conversely,
given any $T\in \cal{L}(\mathfrak{A})$, if we put $a=T(I)\in \mathfrak{A}$, then by \eqref{equ:module linear} we have
$T(x)=T(I\cdot x)=T(I)x=ax=L_a(x)$ for any $x\in \mathfrak{A}$. It follows that $\cal{L}(\mathfrak{A})\cong \mathfrak{A}$ vis left regular representation.

Now let $A=L_{A_1}$ and $B=L_{B_1}$, where $A_1$ and $B_1$ are defined by
\eqref{defn:counterexample A1} and \eqref{defn:counterexample B1}, respectively. It is obvious that $A,B\in \cal{L}(\mathfrak{A})_+$ such that
\begin{equation}\label{equ:left representation fromula for A and A+B} A^\frac12=L_{A_1^\frac12}\ \mbox{and}\ (A+B)^\frac12=L_{(A_1+B_1)^\frac12}.\end{equation} In what follows, we show that Eq.\,\eqref{equ:operator equation-no solution}
is unsolvable for such $A$ and $B$.

Suppose on the contrary that there exists $C_1=D_1+\lambda I \in \mathfrak{A}$ with $D_1\in \mathbb{C}(E)$ and $\lambda \in \mathbb{C}$ such that
$A^{1/2}=(A+B)^{1/2}L_{C_1}$. In view of \eqref{equ:left representation fromula for A and A+B} we obtain
\begin{equation}\label{ex:A1^{1/2}=(A1+B1)^{1/2}(D1+lamda I)} A_1^{1/2}=(A_1+B_1)^{1/2}(D_1+\lambda I).
\end{equation}
We may combine \eqref{defn:counterexample A1} and \eqref{defn:counterexample B1} with \eqref{ex:A1^{1/2}=(A1+B1)^{1/2}(D1+lamda I)}) to get
\begin{equation}\label{ex:D1+lambda I=P}D_1+\lambda I=P,
\end{equation}
where $P$ is the projection from $E$ onto the closed linear span of $\{ e_1,e_3,... \}$.
Since $P=P^*$, by \eqref{ex:D1+lambda I=P} we have
\begin{equation*}\label{ex:D1-D1^*}D_1-D_1^*=(\overline{\lambda}-\lambda)I\in \mathbb{C}(E),
\end{equation*}
which happens only if $\lambda=\overline{\lambda}$. Similarly, the equation $P=P^2$ gives $\lambda=0$ or $\lambda=1$.
If $\lambda=0,$ then from \eqref{ex:D1+lambda I=P} we have  $P=D_1\in \mathbb{C}(E)$, which is a contradiction since dim\big($\mathcal{R}(P)\big)=+\infty$.
If on the other hand $\lambda=1$, then $I-P=-D_1\in \mathbb{C}(E)$, which is also a contradiction.
\end{proof}

\vspace{2ex}

\end{document}